\def\SC { \mathscr{C}}
\def \FP{\mathfrak{q}}  
\def \EPk{\mathfrak{q}}
\def\CM { \mathcal{M}}
\def\CP { \mathcal{P}}
\def\CG {\mathcal{G}}
\def\CP {\mathcal{P}}
\def \CC{\mathcal{C}}
\def \FP{\mathfrak{q}}
\def\eps {\epsilon}
\def\J {\mathbf{1}}
\def\N {\mathbb{N}}
\def\E {\mathbb{E}}
\def\N {\mathbb{N}}
\def\P {\mathbb{P}}
\def \XX{\mathbb{X}}
\newcommand{\bn}{\boldsymbol{n}}
\newcommand{\bmm}{\boldsymbol{m}}
\newcommand{\blam}{\boldsymbol{ \lambda}}
\newcommand{\bc}{\boldsymbol{c}}
\newtheorem{definition}{Definition}
\newtheorem{thm}[definition]{Theorem}
\newtheorem{lemma}[definition]{Lemma}
\newtheorem{proposition}[definition]{Proposition}
\newtheorem{corollary}[definition]{Corollary}
\newtheorem{rem}{rem}
\title[Clustering structure for species sampling sequences]{Clustering structure for species sampling sequences with general base measure}
\author{Federico Bassetti$^*$}\thanks{$^*$ Department of Mathematics, Politecnico of Milano, Italy.  e-mail: \href{mailto:federico.bassetti@polimi.it }{federico.bassetti@polimi.it}}
\author{Lucia Ladelli$^{**}$}\thanks{$^{**}$ Department of Mathematics, Politecnico of Milano, Italy.  e-mail: \href{mailto:lucia.ladelli@polimi.it }{lucia.ladelli@polimi.it}}
\begin{document}

\maketitle

\begin{abstract}

We investigate the clustering structure of species sampling sequences $(\xi_n)_n$,
with general base measure. Such sequences are exchangeable with 
a species sampling random probability as 
directing measure. The clustering properties of these sequences are interesting 
for Bayesian nonparametrics applications, where  mixed base measures
are used, for example,  to accommodate sharp hypotheses in regression problems and provide sparsity. 
In this paper,  we prove a stochastic representation for  $(\xi_n)_n$ in terms 
of a latent exchangeable random partition. We provide explicit expression of the 
EPPF of the partition generated by $(\xi_n)_n$  in terms of the  EPPF of the latent partition.
We  investigate  the  asymptotic behaviour of 
the total number of blocks and of  the number of blocks  with fixed cardinality 
in the partition generated  by  $(\xi_n)_n$.

\end{abstract}

\section{Introduction}

{
Many 
important nonparametric priors, e.g. the Dirichlet  and the Pitman Yor process, 
can be seen as particular 
 {\it Species Sampling random probabilities}, that is random probabilities  of the form 
\begin{equation}\label{eq0}
   P= \sum_{j \geq 1} p_j \delta_{Z_j},
\end{equation}
   where
 $(Z_j)_{j \geq 1}$ are i.i.d. random variables taking values in a Polish space $\XX$ with common distribution $H$ 
 and $(p_j)_{j \geq 1}$ are random positive weights in $[0,1]$ independent from $(Z_j)_{j \geq 1}$.

With few exceptions, see e.g. \cite{RLP,Sangalli2006,Broderick2018},
the  {\it base measure} $H$ of such processes is usually assumed to be diffuse, 
since this simplifies the derivation of some analytical 
results. 
A sequence of random variables whose  directing measure is a  species sampling 
 random probability  (with diffuse base measure) is usually called {species sampling sequence}, 
 and the combinatorial structure of such sequences has  been deeply investigated, 
 see \cite{Pit06} and the references therein.

Recently 
mixed base measures appeared in Bayesian nonparametrics, since  
in various applications 
the available prior information leads naturally to the incorporation
of atoms into the base measure. For example,  in order 
to induce sparsity and facilitate variable selection, Dirichlet Processes with 
Spike-and-slab base  measures have  been used by many authors, see e.g.  
 \cite{Dunson08,Vannucci2009,suarez2016,Cui2012,Barcella2016}. 
Spike and slab  base measures have also been considered for a Pitman Yor process
in \cite{Canale2017}, where
computable expressions for the distribution of the 
random partitions  induced by such a process 
are derived and used for  predictive inference.

Motivated by the  recent interest in species sampling models
with spike and slab base measure, 
in  this paper we  discuss some relevant properties 
of random partitions induced by  species sampling sequences  
 with general base measure. 

 We prove a stochastic representation for  species sampling sequences with a general base measure in terms 
of a latent exchangeable random partition, Proposition \ref{prop_gsss} . We provide explicit expression of the 
{\it Exchangeable Partition Probability Function} (EPPF)
 of the 
  partition generated by such sequences in terms of the  EPPF of the latent partition, 
 Proposition \ref{prop_partition}.  The special case of 
 spike and slab base measure is further detailed in Proposition \ref{SpikeSlabEPPF}.
  Finally, 
  we  investigate  the  asymptotic behaviour of 
the total number of blocks and of  the number of blocks  with fixed cardinality 
of the partition induced by the sequence, Proposition \ref{PropAs1},
Theorems \ref{PropAs2} and \ref{PropAs3}.

Our   approach is different from the one used  in  
\cite {Sangalli2006} and \cite{Canale2017}, which is based on  
specific properties of 
nomalized  random measures. 
Using combinatorial arguments  developed in \cite{Pit06},
we are able to consider more general species sampling sequences and study their asymptotic properties.

\section{Species sampling sequences  with general base measure}
We start recalling  some basic concepts on  random partitions. More details and results are collected in the Appendix.
A partition $\pi_n$ of $[n]:=\{1,\dots,n\}$  is an unordered
collection $(\pi_{1,n},\dots,\pi_{k,n})$  of disjoint non-empty subsets (blocks) {of $\{1,\dots,n\}$}
 such that $\cup_{j=1}^k \pi_{j,n}=[n]$. 
 A partition $\pi_n=[\pi_{1,n},\pi_{2,n},\dots,\pi_{k,n}]$   has  $|\pi_n|:=k$ blocks (with $1 \leq |\pi_n|  \leq n$)  and  $|\pi_{c,n}|$, with $c=1,\dots,k$, is the number of  elements of the block 
$c$. 
We denote by 
 $\CP_n$  the collection of all  partitions of $[n]$ and, 
given a partition,  
we list its blocks in ascending order of their smallest element, i.e.  {\it in order of their appearance}. 
Given a  permutation $\rho$ of $[n]$ and  $\pi_n$ in $\CP_n$, denote by 
 $\rho(\pi_n)$ the partition with blocks $\{ \rho(j) : j  \in \pi_{i,n} \}$ for  $i=1,\dots,|\pi_n|$.
A  sequence of random partitions, $\Pi=(\Pi_n)_{n \geq 1}$, is called 
 {\it random partition of $\N$} if 
for each $n$ the random variable  $\Pi_n$ takes  values in $\CP_n$ 
and, for $m < n$, the restriction of $\Pi_n$  to $\CP_m$ is $\Pi_m$ ({\it consistency property}).
A random partition of $\N$  is said to be {\it exchangeable} 
if 
 $\Pi_n$ 
has the same distribution of $\rho(\Pi_n)$ 
  for every $n$ and every permutation $\rho$ of $[n]$. 

The law of any exchangeable random partition on $\N$ is characterized by its
 {\it Exchangeable Partition Probability Function} (EPPF), that is
there exists a unique symmetric function $\EPk$ on the integers\footnote{
 An EPPF can be seen as a family of 
 symmetric functions $\EPk_{k}^n(\cdot)$ defined on 
 $\CC_{n,k}=\{ (n_1,\dots,n_k) \in \N^k: \sum_{i=1}^k n_{i}=n\}$. 
 To lighten the notation we simply write $\EPk$ in place of $\EPk_{k}^n$.
 Alternatively, one can think that $\EPk$ is a function on 
 $\cup_{n \in \N} \cup_{k=1}^n \CC_{n,k}$.
 } such that, for any partition $\pi_n$ in $\CP_n$
\begin{equation}\label{EPPF1}
\P\{ \Pi_n=\pi_n\}=\EPk\left(|\pi_{1,n}|,\dots,|\pi_{k,n}|\right)
\end{equation}
 where $k$ is the number of blocks in $\pi_n$. See \cite{Pit06}.

%
%
 
 Kingman's correspondence theorem   (see Proposition \ref{kingmanth} in  Appendix)  sets up a one-to-one correspondence between 
 the law of an exchangeable  random partition on $\N$ (i.e. its 
EPPF) and 
 the law of random ranked weights $(p_j^{\downarrow})_{j \geq 1}$ satisfying 
  $1 \geq p_1^{\downarrow} \geq p_2^{\downarrow} \geq \dots \geq 0$
 and $\sum_jp_j^{\downarrow} \leq 1$ (with probability one).     

%
Given the {\it Species Sampling random probability} \eqref{eq0},
if $(p_j^{\downarrow})_{j \geq 1}$ is the ranked sequence obtained from  $(p_j)_{j \geq 1}$, one can 
always 
write  
\[
P
= \sum_{j \geq 1} p_j^{\downarrow} \delta_{Z_j'}
\]
where $(Z_j')_{j \geq 1}$ is a suitable random reordering of the original sequence $(Z_j)_{j \geq 1}$. 
It is plain to check that  $(Z_j')_{j \geq 1}$  are i.i.d. random variables with law $H$ independent from 
$(p_j^{\downarrow})_{j \geq 1}$. Hence, $H$ and   the EPPF $\FP$ 
associated via Kingman's correspondence to  $(p_j^{\downarrow})_{j \geq 1}$ completely characterize  the law of 
 $P$, from now on denoted by 
$SSrp(\FP,H)$.

Note that  in \eqref{eq0} we implicitly assume 
$\sum_j p_j=1$ almost surely, and hence we  are not considering 
the most general form of 
species sampling models, see \cite{Pit06}.

We lastly say that  a sequence $\xi=(\xi_n)_{n \geq 1}$ is a 
{\it generalized species sampling sequence}, 
 $gSSS(\FP,H)$, if the variables 
$\xi_n$ are conditionally i.i.d. given  $P$ from some  $P \sim SSrp(\FP,H)$ or, equivalently, if 
the directing measure of  $(\xi_n)_{n \geq 1}$ is a $SSrp(\FP,H)$.
If  $(\xi_n)_{n \geq 1}$ is  a $gSSS(\FP,H)$ with $H$ diffuse,  then 
it is a  
Species Sampling Sequences in the sense of Definition 12  in  \cite{Pitman96} and 
the random partition $\Pi(\xi)$
\footnote{If $X=(X_j)_{j\geq 1}$ is a sequence of random variables,
$\Pi(X)$ denotes  the random  partition 
obtained by the equivalence classes under the random equivalence relation $i(\omega) \sim j(\omega) $ if and only if $X_i(\omega) = X_j(\omega)$.
}
 (induced by $(\xi_n)_{n \geq 1}$)
has   EPPF $\FP$, see  Proposition 13 in \cite{Pitman96}. 
If   $(\xi_n)_{n \geq 1}$ is a $gSSS(\FP,H)$ but  $H$ is not diffuse,
 the relationship between the random partition induced by the sequence $(\xi_n)_{n \geq 1}$ and the EPPF $\FP$ is not as simple as in the diffuse case. 
In order to understand this relation it is usefull to introduce, for a random partition $\Pi$, the random index $\SC_n(\Pi)$ denoting the block containing $n$, that is
\[
\SC_n(\Pi)=c  \text{ if $n \in \Pi_{c,n} $}
\]  
or equivalently  if $n \in \Pi_{c,j} $ for some (and hence all)  $j \geq n$.

The next proposition, which is a refinement of { Proposition 1 in \cite{BaCaRo}}, shows that 
even in the non diffuse case, a $gSSS(\FP,H)$ is strictly related to a random partition $\Pi$ with 
EPPF $\FP$. When $H$ is diffuse $\Pi$ is  the partition induced by $(\xi_n)_n$, 
while if  $H$ has atoms, $\Pi$ is  a latent partition strictly finer than 
the partition induced by $(\xi_n)_n$.

\begin{proposition}\label{prop_gsss}
For a sequence  $(\xi_n)_{n \geq 1}$, 
the following are equivalent:
\begin{itemize}
\item[(i)] $(\xi_n)_{n \geq 1}$ is a  $gSSS(\FP,H)$;
\item[(ii)]  for every $n \geq 1$,
\begin{equation*}\label{allocationxi}
  \xi_n=Z_{I_n},  
\end{equation*} 
where 
 $(Z_j)_{j \geq 1}$ are i.i.d. random variables with common distribution $H$, 
  $(p_j )_{j \geq 1}$ is a  sequence of random weights independent from $(Z_j)_{j \geq 1}$, 
 $(I_n)_{n \geq 1}$ are conditionally independent given $(p_j )_{j \geq 1}$ with 
 $P\{I_n=k|(p_j )_{j \geq 1},(Z_j)_{j \geq 1} \}=p_k$. Moreover, 
 the EPPF 
associated via Kingman's correspondence to  $(p_j^{\downarrow})_{j \geq 1}$ is $\FP$. 
\item[(iii)] for every $n \geq 1$
\begin{equation*}\label{allocationxi}
  \xi_n=Z'_{\SC_n(\Pi)},
\end{equation*}
where  $\Pi$ is a random 
partition  with EPPF  $\FP$, $(Z'_j)_{j \geq 1}$ are i.i.d. random variables with common distribution $H$, 
 $\Pi$ and  $(Z'_j)_{j \geq 1}$ are stochastically independent.
\end{itemize}
\end{proposition}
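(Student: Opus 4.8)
The plan is to prove the cycle $(i)\Rightarrow(ii)\Rightarrow(iii)\Rightarrow(i)$, so that the three statements become equivalent descriptions of the law of $(\xi_n)_{n\geq 1}$. The passage between $(i)$ and $(ii)$ is essentially an unfolding of the definition of \(SSrp(\FP,H)\) together with an enlargement of the probability space. Starting from $(i)$, I would write the directing measure as $P=\sum_{j\geq 1}p_j\delta_{Z_j}$ with $(Z_j)_j$ i.i.d.\ $H$ independent of $(p_j)_j$ and with $(p_j^\downarrow)_j$ matched to $\FP$ by Kingman's correspondence; on an enlarged space I then draw, conditionally on $(p_j)_j,(Z_j)_j$, a sequence $(I_n)_n$ of conditionally independent indices with $\P\{I_n=k\mid (p_j)_j,(Z_j)_j\}=p_k$, and set $\xi_n=Z_{I_n}$. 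The point to check is that this reproduces the correct conditional law: given $(p_j)_j,(Z_j)_j$ the variable $Z_{I_n}$ has law $\sum_j p_j\delta_{Z_j}=P$ and the $\xi_n$ are conditionally independent, so $(\xi_n)_n$ is conditionally i.i.d.\ $P$ given $P$, as required. The converse $(ii)\Rightarrow(i)$ is immediate: setting $P=\sum_j p_j\delta_{Z_j}$ gives $P\sim SSrp(\FP,H)$ by the hypotheses on $(Z_j)_j$ and $(p_j^\downarrow)_j$, and the conditional independence of the $I_n$ forces $(\xi_n)_n$ to be conditionally i.i.d.\ from $P$.

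The heart of the argument is $(ii)\Rightarrow(iii)$, which rests on the paintbox reading of Kingman's correspondence. Given the allocation sequence $(I_n)_n$ of $(ii)$, I would introduce the random partition $\Pi:=\Pi(I)$, in which $i\sim j$ iff $I_i=I_j$. Because $\sum_j p_j=1$ almost surely there is no leftover mass, and the standard paintbox computation shows that $\Pi$ is exchangeable with EPPF determined, via Kingman's correspondence, by the ranked weights $(p_j^\downarrow)_j$; by hypothesis this EPPF is exactly $\FP$. It then remains to rewrite $\xi_n=Z_{I_n}$ in the form $Z'_{\SC_n(\Pi)}$. Letting $J_c$ denote the $c$-th distinct value appearing in $(I_n)_n$ read in increasing order of $n$, the block labelled $c$ in order of appearance is precisely the set $\{n:I_n=J_c\}$, so that $I_n=J_{\SC_n(\Pi)}$ and hence $\xi_n=Z_{J_{\SC_n(\Pi)}}$; I then set $Z'_c:=Z_{J_c}$.

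The crux, and the step I expect to require the most care, is verifying that $(Z'_c)_{c\geq 1}$ are i.i.d.\ with law $H$ and independent of $\Pi$. The indices $(J_c)_c$ and the partition $\Pi$ are measurable functions of $(p_j)_j$ and of the allocation randomness alone, hence independent of the array $(Z_j)_j$, and the $J_c$ are almost surely distinct. Since selecting the entries of an i.i.d.\ sequence at a family of a.s.\ distinct, independently chosen indices returns i.i.d.\ variables with the same marginal $H$, independent of the choice of indices, one obtains both the i.i.d.\ $H$ law of $(Z'_c)_c$ and its independence from $\Pi$; this completes $(ii)\Rightarrow(iii)$. For $(iii)\Rightarrow(i)$ I would invoke Kingman's correspondence in the reverse direction: a $\Pi$ with EPPF $\FP$ has, in distribution, the form $\Pi(I)$ for an allocation sequence driven by the ranked weights $(p_j^\downarrow)_j$ associated to $\FP$, so since the law of $(\xi_n)_n$ in $(iii)$ depends only on the law of $\Pi$ and on the independent i.i.d.\ $H$ sequence, one may realize this representation, recover $(ii)$, and conclude via $(ii)\Rightarrow(i)$. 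Throughout, the delicate points are the atom-compatible bookkeeping (the latent $\Pi$ is genuinely finer than the partition of $(\xi_n)_n$ when $H$ has atoms, since $I_i\neq I_j$ may still yield $\xi_i=\xi_j$) and the independence extraction in the selection step, both of which hinge on the distinctness of the $J_c$ and the independence of $(Z_j)_j$ from the weights.
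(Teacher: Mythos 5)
Your overall architecture --- the cycle $(i)\Rightarrow(ii)\Rightarrow(iii)\Rightarrow(i)$, the latent allocation indices $I_n$, the distinct values $J_c$ taken in order of appearance with $Z'_c=Z_{J_c}$, and the selection argument showing that evaluating an i.i.d.\ sequence at a.s.\ distinct random indices independent of it returns an i.i.d.\ sequence independent of those indices --- coincides with the paper's proof. Your $(ii)\Rightarrow(iii)$ and $(iii)\Rightarrow(i)$ are sound (the paper additionally handles the case where $(I_n)_n$ takes only finitely many distinct values by appending the unused $Z_j$'s to the list of $Z'_c$'s; you omit this, but it is routine).

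The genuine gap is in $(i)\Rightarrow(ii)$. Statement $(ii)$ asserts an almost sure representation $\xi_n=Z_{I_n}$ of the \emph{given} sequence, whereas you construct a \emph{new} one: you draw $(I_n)_n$ conditionally i.i.d.\ from $(p_j)_j$ and then \emph{define} $\xi_n:=Z_{I_n}$, which only yields a process equal in law to the original. As written this establishes $(ii)\Rightarrow(i)$ twice but says nothing about the original $(\xi_n)_{n\geq 1}$. To close the gap you must either invoke a transfer argument (extending the probability space to carry the coupling across an equality in distribution) or, as the paper does, build the indices conditionally on the observed values via
\[
\P\{I_n=j\mid \xi_n,p,Z\}=\frac{p_j}{\sum_{\{i:Z_i=\xi_n\}}p_i}\,\J\{Z_j=\xi_n\},
\]
and then check that $\P\{I_n=j,\ \xi_n\in A\mid p,Z\}=p_j\,\delta_{Z_j}(A)$, so that the $I_n$ are conditionally i.i.d.\ with weights $p$, independent of $Z$ given $p$, and satisfy $Z_{I_n}=\xi_n$ a.s. This is precisely where the generality of the base measure bites: when $H$ has atoms the index $j$ with $Z_j=\xi_n$ need not be unique, which is why the conditional law above requires the normalization over $\{i:Z_i=\xi_n\}$ and why this direction is not ``essentially an unfolding of the definition'' as you describe it. The remainder of your write-up is faithful to the paper's argument.
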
 
\begin{proof}
Let $(\xi_n)_{n \geq 1}$ be an exchangeable sequence of conditionally i.i.d. random variables, 
given  $P=\sum_{j \geq 1} p_j \delta_{Z_j}$ with law $P$.
Set $p=(p_j)_{j \geq 1}$ and $Z=(Z_j)_{j \geq 1}$. 
On a suitable enlarged probability space one can define a sequence $(I_n)_{n\geq 1}$ of integer random variables 
such that the $I_n$s are conditionally independent given $[(\xi_n)_{n \geq 1},  p,Z]$
and, up to a set of probability zero, 
\[
\P \{  I_n=j  | \xi_n,  p,Z \}= \frac{p_j}{\sum_{\{ i  : Z_i=\xi_n\}}  p_i}   \J\{ Z_j=\xi_n\}. 
\]
Now 
\[
\begin{split}
\P \{  I_n=j , \xi_n \in A | p,Z  \} & = 
\int_A    \P \{  I_n=j  | \xi_n,  p,Z \}  \P\{ \xi_n \in dx | p,Z \} \\
& 
=\int_A \sum_m   \delta_{Z_m}(dx)  p_m   \frac{p_j}{\sum_{\{ i  : Z_i=Z_m\}}  p_i}   \J\{ Z_j=Z_m\}      \\
& 
= p_j    \frac{\sum_{m:Z_m=Z_j}  p_m}{\sum_{i:  Z_i=Z_j }  \tilde  p_i}  \J \{Z_j \in A\} 
=p_j   \delta_{Z_j}(A). 
\end{split}
\]
From this, it is easy to deduce 
\[
 P\{ I_n=j_n : n=1,\dots,N |   p, Z\}=P\{ I_n=j_n : n=1,\dots,N |   p \}=\prod_{n=1}^N p_{j_n}  \quad a.s.
\]
that also implies that  $(I_n)_{n \geq 1}$ and $(Z_n)_{n \geq 1}$ are stochastically independent, given $p$. 
Hence (i) yields (ii) since  
\[
 \xi_n=Z_{I_n} \quad \text{a.s.}
\]
Let us  prove that (ii) yields (iii). 
By (A1) in the Appendix, if $\Pi=\Pi(I)$ is the partition induced by $(I_n)_{n \geq 1}$, then 
$\Pi$ has EPPF $\FP$. 
Denote by $I^*_1=I_1,I^*_2,\dots,I_K^*$ (with $K\leq +\infty$) the distinct values of $(I_n)_{n \geq 1}$ in order of appearance, 
and set 
\[
Z'_n=Z_{I^*_n} \quad n=1,\dots,K.
\]
If $K<+\infty$,  define  $(Z'_{K+1},Z'_{K+2},\dots)$ as the remaining $Z_n$s in increasing order. 
Using the independence of $(Z_n)_{n \geq 1}$ and $(I_n)_{n \geq 1}$  and the fact that the $Z_n$s are identically distributed, it follows 
that $(Z_n')_n$ is a sequence of i.i.d. random variables with common distribution $H$ and that 
$(Z_n')_n$ and $(I_n)_{n \geq 1}$ are stochastically independent. 
To conclude note that, with probability one, $\displaystyle I^*_{\SC_n(\Pi)}=I_n$ and hence
\[
 \xi_n=Z_{I_n}=Z_{I^*_{\SC_n(\Pi)}}=Z'_{\SC_n(\Pi)}.
\]
Conversely let us show that (iii) yields (ii). Let $(p_j^{\downarrow})_{j\geq 1}$  be the weights obtained from $\Pi$ by 
\eqref{kingmanTh}
 in Appendix. According to  (A2) in Appendix, it is possible to define  integer valued random 
variables $I_1,I_2,\dots$  
conditionally i.i.d., given $(p_j^{\downarrow})_{j\geq 1}$, with conditional distribution 
$\P\{ I_n=j| p^{\downarrow} \} = p^{\downarrow}_j$
such that $\Pi=\Pi(I)$ a.s..  Hence $\displaystyle \SC_n(\Pi)=\SC_n(\Pi(I))$ and $\displaystyle I^*_{\SC_n(\Pi)}=I_n$, as above. Setting
\[
Z_m
:=\left \{ 
 \begin{array}{ll}
Z'_k & \text{if $I^*_k=m$} \\
Z''_m & \text{if $I^*_k\neq m \ \forall \,\,k$}, \\
 \end{array}
 \right . 
\]
with $ Z''_m, m=1,2,\dots,$ i.i.d., independent from everything else and $Z''_m\sim H$. Then the $Z_m$s satisfy all the required properties and, 
in particular,
\[
Z_{I_n}=Z_{I^*_{\SC_n(\Pi)}}=Z'_{\SC_n(\Pi)}.
\]

To conclude we show that (ii) yields (i). 
Set  $P=\sum_{j \geq 1} p_j \delta_{Z_j}$ and recall that $\xi_n=Z_{I_n}$ by assumption. 
Given the Borel sets, $A_1,\dots,A_n$, and the integer numbers $i_1,\dots,i_n$, then we have 
\[
\P\left\{\xi_1 \in A_1, \dots, \xi_n \in A_n, I_1=i_1,\dots,I_n =i_n \middle \lvert  P,  (p_n)_{n \geq 1}, \left(Z_n\right)_n \right\}=
\prod_{j=1}^n \delta_{Z_{i_j}} (A_j) p_{i_j},
\]
and by marginalising,  
\[
\P\left\{\xi_1 \in A_1, \dots, \xi_n \in A_n \middle \lvert   P,  (p_n)_{n \geq 1}, \left(Z_n\right)_n  \right\}=
\sum_{i_1 \geq 1, \dots, i_n \geq 1} \prod_{j=1}^n \delta_{Z_{i_j}} (A_j) p_{i_j}
=\prod_{j=1}^n P (A_j).
\]
Hence, 
\[
\P\left\{\xi_1 \in A_1, \dots, \xi_n \in A_n |    P \right\}=\prod_{j=1}^n P (A_j)
\]
almost surely. Since $\XX$ is Polish, we proved that, given $P$, $(\xi_n)_{n \geq 1}=(Z_{I_n})_n$
 are  i.i.d.  with common distribution $P$, i.e. $(\xi_n)_{n \geq 1}$ is 
a $gSSS(\EPk,H)$ for $\EPk$ the EPPF corresponding to $(p_n)_{n \geq 1}$. 
\end{proof}

A simple consequence of the previous proposition is the next 

\begin{corollary} Let  $(\xi_n)_{n \geq 1}$ be a  $gSSS(\FP,H)$.
For every $A_1,\dots,A_n$ Borel sets in $\XX$, 
\[
\P\left\{ \xi_1 \in A_1, \cdots, \xi_n \in A_n\right\}= \sum_{\pi_n \in \CP_n}
\EPk(|\pi_{1,n}|,\dots,|\pi_{k,n}|)
 \prod_{c=1}^{|\pi_n|} H( \cap_{j \in \pi_{c,n}} A_j ).
\] 
\end{corollary}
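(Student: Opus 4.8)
The plan is to exploit the stochastic representation (iii) of Proposition \ref{prop_gsss}, which writes $\xi_n = Z'_{\SC_n(\Pi)}$ with $\Pi$ a random partition of EPPF $\FP$, independent from the i.i.d. sequence $(Z'_j)_{j \geq 1}$ of common law $H$. The strategy is to condition on the finite partition $\Pi_n$, observe that the events $\{\xi_j \in A_j\}$ factorize across the blocks of $\Pi_n$, and then sum over all partitions using the defining property \eqref{EPPF1} of the EPPF.

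First I would note that for $1 \le j \le n$ the allocation index $\SC_j(\Pi)$ depends only on the restriction $\Pi_n$: if $\Pi_n = \pi_n$ has blocks $\pi_{1,n},\dots,\pi_{k,n}$ listed in order of appearance, then $\SC_j(\Pi)=c$ for every $j \in \pi_{c,n}$, so that $\xi_j = Z'_c$ for all $j$ in block $c$. Hence, on the event $\{\Pi_n = \pi_n\}$, the variables $\xi_j$ are constant within each block, and
\[
\{\xi_1 \in A_1,\dots,\xi_n \in A_n\} = \bigcap_{c=1}^{k} \Big\{ Z'_c \in \bigcap_{j \in \pi_{c,n}} A_j \Big\}.
\]

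Next, using that $(Z'_j)_{j \ge 1}$ is independent of $\Pi$ and consists of i.i.d.\ variables with law $H$, the conditional probability factorizes over the $k$ distinct blocks:
\[
\P\Big\{\xi_1 \in A_1,\dots,\xi_n \in A_n \,\Big|\, \Pi_n = \pi_n \Big\} = \prod_{c=1}^{k} H\Big( \bigcap_{j \in \pi_{c,n}} A_j \Big).
\]
Taking expectation over $\Pi_n$ and invoking \eqref{EPPF1}, which gives $\P\{\Pi_n = \pi_n\} = \EPk(|\pi_{1,n}|,\dots,|\pi_{k,n}|)$ since $\Pi$ has EPPF $\FP$, yields the claimed identity after summing over $\pi_n \in \CP_n$.

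The computation is routine once representation (iii) is in hand; the only point requiring care is the clean separation of the two sources of randomness — the block structure encoded in $\Pi_n$ and the labels $(Z'_c)$ — which is precisely what the independence asserted in (iii) provides. In particular, no property of $H$ beyond being a probability measure is used, so the identity holds for an arbitrary, possibly atomic, base measure.
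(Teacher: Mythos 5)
Your proof is correct and follows exactly the route the paper intends: the corollary is stated as a direct consequence of Proposition \ref{prop_gsss}, and conditioning on $\Pi_n=\pi_n$ in representation (iii), factorizing over blocks via the independence of $\Pi$ and the i.i.d.\ labels $(Z'_j)_{j\geq 1}$, and summing with \eqref{EPPF1} is precisely the ``simple consequence'' the authors leave to the reader. No gaps.
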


\begin{rem}[Chinese Restaurant]\label{RemChinese}
{\rm Proposition \ref{prop_gsss}} can be restated in terms of the well-known  Chinese Restaurant  methaphor. 
In this metaphor, the  observations $\xi_n$ (identified by the indices $n=1,2,\dots$) are attributed to ``customers'' of a "restaurant".
First ``customers'' are clustered 
according to ``tables'', which are then clustered in an higher hierarchy by means of ``dishes''.
The  first  step of the clustering process (the sitting plan) is driven  by the random partitions $\Pi$,
with EPPF $\FP$,  that is $\SC_n(\Pi)$ with $n=1,2,\dots$.  At the second level, the dish $Z_i$ of table $i$ is sampled 
from $H$, independently for $i=1,2,\dots$.
\end{rem}

\section{Partition induced by  Species Sampling Sequences with general base measure}

Let  
$\tilde \Pi$ be the  random  partition induced by a   $gSSS(\FP,H)$.
From Proposition \ref{prop_gsss} and Remark \ref{RemChinese}, it is clear that if $H$ has atoms, different "tables" can merge in the final clustering configuration
described by  $\tilde \Pi$. In other words, two observations (customers) can share the same value (dish)
because they sit at the same table or  because they sit in different tables but they both sample 
the same dish from $H$. 
This simple observation leads to  write the EPPF of 
the random partition   $\tilde \Pi$ using the EPPF of   
$\Pi$ and the probability of ties in a vector of i.i.d. random variables  drawn from $H$.

\subsection{Explicit expression of the EPPF}
To go further,  we need some more notation. 
Given a vector ${\bn}=(n_1,\dots,n_k)$ of integer numbers such that $n=\sum_{i=1}^k n_i$, set
  \[
 \CM({\bn})=\Big \{  \bmm=( m_{1},\dots,m_{k})  \in \N^k:  1 \leq m_{i} \leq n_{i } \Big \}
\]
and, for $\bmm$ in  $\CM({\bn})$, define  $|\bmm|=\sum_{i=1}^k m_i$ and 
\[
 \Lambda(\bmm)
 :=\left \{ 
\begin{array}{lll}
&  
\text{$ \blam=[\blam_{1},\dots,\blam_{k}]$ where $\blam_{i}=(\lambda_{i1},\dots,\lambda_{in_{i}}) \in \N^{n_{i}} $}:  \\
%
& \quad  \sum_{j=1}^{n_{i}} j \lambda_{ij}=n_{i},  \sum_{j=1}^{n_{i}}  \lambda_{ij}=  m_{i} \,\, \text{ for $i=1,\dots,k$ } \\
\end{array}
\right\}.
\]
For $\blam$ in  $\Lambda({\bmm})$, define 
\[
\bc(\blam):=\prod_{i=1}^k  \frac{n_{i }!}{\prod_{j=1}^{n_{i }} \lambda_{ij}! (j!)^{\lambda_{ij}}    }
\]
and, given the EPPF $\FP$, set
\[
\tilde \FP (\blam):=  \FP (n_{11},\dots,n_{1m_{1}},n_{21}, \dots,n_{k m_{k}} ),
 \]
where $(n_{11},\dots,n_{1m_{1}},\dots,n_{k m_{k}})$ is any sequence of  integer numbers such that 
 $ \sum_{c=1}^{m_{i}} n_{i c } = n_{i}$ for every $i$ and 
$\#\{ c: n_{ic} =j \}= \lambda_{ij} $ for every $i$ and $j$. 
Note that 
since the value of $\FP(n_{11},\dots,n_{1m_{1}},n_{21}, \dots,n_{k m_{k}} )$
depends only on the statistics $\blam$, $\tilde \FP (\blam)$ is well defined. See e.g. \cite{Pit06}.

Let us consider an i.i.d. sample of length $|\bmm|$ from $H$ and
denote by 
$H^{\#}(\bmm)$ the probability of getting
exactly $k$ ordered 
blocks  with cardinality $m_1,\dots,m_k$,  such that 
observations in each block are equal and observations in distinct blocks are different.  
In order to write  $H^\#(\bmm)$  explicitly, we  decompose $H$ as 
 \begin{equation}\label{acca}
 H(dx)= \sum_{i=1}^{+\infty} \bar a_i \delta_{\bar x_i}(dx)+ (1-a) H^{c}(dx)
\end{equation}
where 
 $\XX_0:=\{\bar  x_1,\bar  x_2,\dots\}$  is the  collection of points  with positive $H$ probability,
 $\bar a_i=H(\bar x_i)$, 
 $a=H(\XX_0) \in [0,1]$ and $H^c(\cdot)=H(\cdot \cap \XX_0^c)/H(\XX_0^c)$ 
is a diffuse probability measure on $\XX$. 

Given $\bmm=(m_1,\dots,m_k)$ in  $\CM({\bn})$  let 
$\bmm^*$ the vector containing all the elements  $m_i>1$ and let $r$ be its length, 
with possibly $r=0$ if $\bmm=(1,1,\dots,1)$, and define for $\ell\geq 0$
\[
A_{\bmm,\ell}=
\sum_{j_1\not = \dots \not =j_{r+\ell} } \bar a_{j_1}^{m_{1}^*} \dots \bar a_{j_r}^{m_{r}^*}   \bar a_{j_{r+1}} \dots  \bar a_{j_{r+\ell}} 
\]
with the convention that $A_{\bmm,0}=1$ when $r=0$. 
A simple combinatorial argument shows that 
\[
H^{\#}(\bmm) = \sum_{\ell=0}^{k-r} (1-a)^{k-\ell-r}  { k-r \choose \ell } A_{\bmm,\ell}.
\]

\begin{proposition}\label{prop_partition}
Let  $(\xi_n)_{n \geq 1}$ be a $gSSS(\FP,H)$.
Denote by $\tilde \Pi$ the  random  partition induced by $(\xi_n)_{n \geq 1}$.
 If $\pi_n=[\pi_{1,n}\dots,\pi_{k,n}]$ is a partition of $[n]$ with
$|\pi_{i,n}|= n_i$ ($i=1,\dots,k$) 
and ${\bn}=(n_1,\dots,n_k)$, then 
 \[
\P\{ \tilde \Pi_n=\pi_n \}= \sum_{ \bmm \in \CM({\bn}) } H^{\#}(\bmm)
\sum_{  \blam  \in \Lambda(\bmm) } 
\bc(\blam)  \tilde \FP (\blam).
\]
\end{proposition}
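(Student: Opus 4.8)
The plan is to condition on the latent partition $\Pi$ furnished by Proposition \ref{prop_gsss}(iii) and then integrate out the dishes. I would write $\xi_n=Z'_{\SC_n(\Pi)}$, with $\Pi$ (of EPPF $\FP$) independent of the i.i.d. sequence $(Z'_j)_{j\geq 1}\sim H$, and begin by recording the elementary observation of Remark \ref{RemChinese}: if $i,j$ lie in the same block of $\Pi_n$ then $\SC_i(\Pi)=\SC_j(\Pi)$, so $\xi_i=\xi_j$ and they share a block of $\tilde\Pi_n$. Hence $\Pi_n$ always refines $\tilde\Pi_n$. Consequently the event $\{\tilde\Pi_n=\pi_n\}$ decomposes over the latent configurations $\Pi_n$ refining $\pi_n$: each block $\pi_{i,n}$ of size $n_i$ is split by $\Pi_n$ into some number $m_i$ of ``tables'', with $1\leq m_i\leq n_i$, i.e.\ into a set partition of $\pi_{i,n}$.

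Next I would make explicit the two conditions that turn such a refinement into the prescribed observed partition. Given a refinement with $m_i$ tables inside $\pi_{i,n}$, the total number of tables in $[n]$ is $|\Pi_n|=\sum_i m_i=|\bmm|$, and each table is assigned one dish from the family $(Z'_j)$. For $\tilde\Pi_n$ to equal exactly $\pi_n$, the dishes of the $m_i$ tables inside a common block must all coincide, while dishes attached to tables in distinct blocks must differ; otherwise two blocks of $\pi_n$ would merge. Because $\Pi$ and $(Z'_j)$ are independent, conditionally on $\Pi_n$ this dish event depends on the refinement only through $\bmm=(m_1,\dots,m_k)\in\CM(\bn)$, and its probability is precisely $H^{\#}(\bmm)$.

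It then remains to sum the EPPF weights of the admissible refinements. Grouping them first by $\bmm$ and, within a fixed $\bmm$, by the type $\blam\in\Lambda(\bmm)$ recording how many tables of each size sit in each block, the number of set partitions of $\pi_{1,n},\dots,\pi_{k,n}$ realizing the type $\blam$ is the multinomial count $\bc(\blam)=\prod_i n_i!/\prod_j(\lambda_{ij}!\,(j!)^{\lambda_{ij}})$. Since $\FP$ is symmetric, every refinement of type $\blam$ carries the same probability $\tilde\FP(\blam)$. Collecting terms then gives exactly
\[
\P\{\tilde\Pi_n=\pi_n\}
=\sum_{\bmm\in\CM(\bn)} H^{\#}(\bmm)\sum_{\blam\in\Lambda(\bmm)}\bc(\blam)\,\tilde\FP(\blam).
\]
The delicate points I expect to be the main obstacle are, first, justifying that the ``distinct across blocks'' half of the dish event is genuinely required (so the observed partition is $\pi_n$ and not a strict coarsening of it), which is exactly what distinguishes $H^{\#}(\bmm)$ from a naive product of within-block coincidence probabilities; and second, checking that $\bc(\blam)$ counts labeled set partitions of each block into parts of the prescribed multiplicities rather than ordered compositions. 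Both become routine once the conditioning on $\Pi_n$ is set up cleanly.
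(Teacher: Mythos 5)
Your proposal is correct and follows essentially the same route as the paper's proof: decompose $\{\tilde\Pi_n=\pi_n\}$ over latent refinements $\pi_n^*$ of $\pi_n$ grouped by $(\bmm,\blam)$, use independence of $\Pi$ and $(Z'_j)_{j\geq 1}$ to factor out the dish-coincidence event of probability $H^{\#}(\bmm)$, and count the refinements of each type by $\bc(\blam)$ (the paper cites formula (11) of \cite{Pit95} for this count). The two ``delicate points'' you flag are exactly the ones the paper handles via the event $\{\pi_n^*\hookrightarrow\pi_n\}$ and the cardinality of $\CP_{\pi_n}(\blam)$.
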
 

\begin{proof}
In order to describe the partitions $\pi^*_n$ which can give rise to 
$\pi_n$, when some blocks  are merged, we define 
 $\CP_{\pi_n}(\blam )$ as the set of all the partitions 
in $\CP_n$ with $m_1+\dots + m_k=|\bmm|$  blocks such that
\begin{itemize}
\item there are $k$ subset of blocks containing  $m_1,\dots,m_k$ blocks respectively;
\item the union of the blocks in the $i$-th subset 
coincides with the $i$-th block of $\pi_n$ for $i=1,\dots,k$;
\item  the blocks in the $i$-th subset   are formed by 
$\lambda_{ij}$ blocks of $j$ elements for $j=1,\dots,n_i$.  
\end{itemize}
Moreover, if $\pi^*_n=[\pi_{1,n}^*,\dots,\pi_{|\bmm|,n}^*]$ is in $\CP_{\pi_n}(\blam )$
set $M(j)=i$ if $\pi_{j,n}^*$ is in the $i$-th subset of blocks.
Finally, for $Z_1',Z_2',\dots$ as in (iii) of Proposition \ref{prop_gsss},
write $ \{\pi^*_n \hookrightarrow \pi_n\}$ to denote the event 
\[
\{ Z_{j_1}'=Z'_{j_2}\, \text{if 
$M(j_1)=M(j_2)$ and  $Z_{j_1}'\not=Z'_{j_2}$ if $M(j_1)\not=M(j_2)$,  for every $1\leq j_1\leq j_2 \leq |\bmm|$}\}.
\]
Using (iii) in Proposition \ref{prop_gsss}
we may assume that  $\xi_n:=Z'_{\SC_n(\Pi)}$, obtaining 
\[
\{ \tilde \Pi_n=\pi_n\}= \cup_{ \bmm \in \CM({\bn}) } \cup_{  \blam  \in \Lambda(\bmm) } 
 \cup_{  \pi^*_n \in \CP_{\pi_n}(\blam ) } 
\{ \Pi_n= \pi^*_n,  \pi^*_n \hookrightarrow \pi_n\}.
\]
Hence, by independence,
\[
\P\{ \tilde \Pi_n=\pi_n\}= \sum_{ \bmm \in \CM({\bn}) } 
\sum_{  \blam  \in \Lambda(\bmm) } \sum_{  \pi^*_n \in \CP_{\pi_n}(\blam ) }  \P\{\Pi_n= \pi^*_n  \} H^{\#}(\bmm).
\]
Now $\P\{\Pi_n= \pi^*_n  \}=  \tilde \FP (\blam)$ for every $ \pi^*_n \in \CP_{\pi_n}(\blam )$. To conclude it suffices to observe
that the cardinality of  $\CP_{\pi_n}(\blam )$ is 
\[
\prod_{i=1}^k  \frac{n_{i }!}{\prod_{j=1}^{n_{i }} \lambda_{ij}! (j!)^{\lambda_{ij}}    }.
\]
See e.g. formula (11) in \cite{Pit95}.
\end{proof}


\begin{rem}
If $H$ is diffuse,  then $ H^{\#}(\bmm)=0$ for every $\bmm \not =(1,1,\dots,1)$. Hence
the above formula reduces to the familiar
\[
\P\{\tilde \Pi_n=\pi_n\}=\FP(|\pi_{n,1}|, \dots,|\pi_{n,k}|)=\P\{\Pi_n=\pi_n\}.
\]
\end{rem}

An important class of exchangeable random partitions is that of  Gibbs-type partitions, introduced in \cite{GP2006} and  characterized by the EPPF
\begin{equation}\label{gibssEPPF}
\EPk(n_1,\dots,n_k):=V_{n,k} \prod_{j=1}^k (1-\sigma)_{n_j-1},
\end{equation}
where $(x)_n=x(x+1)\dots (x+n-1)$ is the rising factorial (or Pochhammer polynomial), $\sigma<1$ and  $V_{n,k}$ are positive real numbers such that  $V_{1,1}=1$ and
\begin{equation*}
  (n-\sigma k) V_{n+1,k} + V_{n+1,k+1}=V_{n,k},\quad n\geq 1,\,\,1 \leq k \leq n. \label{Rec_V}
\end{equation*}

A noteworthy example
of Gibbs-type  EPPF is the so-called 
Pitman-Yor two-parameters  family.  It is defined by  
\begin{equation}\label{PYeppf}
\EPk(n_1,\dots,n_k):=\frac{\prod_{i=1}^{k-1} (\theta+i\sigma)}{(\theta+1)_{n-1}} \prod_{c=1}^k (1-\sigma)_{n_c-1},
\end{equation}
where $0 \leq \sigma < 1$ and $\theta >-\sigma$; or $\sigma<0$ and $\theta=|\sigma|m$ for some integer $m$, see
\cite{Pit95,Pit97}.

In order to state the next result, we recall that 
\begin{equation}\label{stirling}
\sum_{ \substack{(\lambda_{1},\dots,\lambda_{n}) \\  \sum_{j=1}^{n} j \lambda_{j}=n,  \sum_{j=1}^{n}  \lambda_{j}=k}}
  \prod_{j=1}^{n} [(1-\sigma)_{j-1}]^{\lambda_{j}} \frac{n!}{\prod_{j=1}^{n} \lambda_{i}! (j!)^{\lambda_{j}}    }
=S_\sigma(n,k)
\end{equation}
where $S_\sigma(n,k)$ is the generalized Stirling number of the first kind, see (3.12) in  \cite{Pit06}.
In the same book various equivalent definitions of generalized Stirling numbers  are presented.  

\begin{corollary}
Let   $\tilde \Pi$ as in {\rm Proposition \ref{prop_partition}} with $\EPk$ of Gibbs-type defined  in   \eqref{gibssEPPF}.
 If $\pi_n=[\pi_{1,n}\dots,\pi_{k,n}]$ is a partition of $[n]$ with
$|\pi_{i,n}|= n_i$ ($i=1,\dots,k$) 
and ${\bn}=(n_1,\dots,n_k)$, then 
 \[
\P\{ \tilde \Pi_n=\pi_n \}    =  \sum_{ \bmm \in \CM({\bn}) } H^{\#}(\bmm)
V_{n,|\bmm|}   \prod_{i=1}^k   S_\sigma(n_i,m_i).
\]
\end{corollary}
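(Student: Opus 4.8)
The plan is to substitute the Gibbs-type EPPF \eqref{gibssEPPF} into the general formula of Proposition \ref{prop_partition} and to collapse the inner sum over $\Lambda(\bmm)$ by means of the generalized Stirling identity \eqref{stirling}. The starting point is
\[
\P\{ \tilde \Pi_n=\pi_n \}= \sum_{ \bmm \in \CM({\bn}) } H^{\#}(\bmm)
\sum_{  \blam  \in \Lambda(\bmm) }
\bc(\blam)  \tilde \FP (\blam),
\]
and since the factor $H^{\#}(\bmm)$ already matches the target expression, it suffices to show that for each fixed $\bmm=(m_1,\dots,m_k)$ the inner sum equals $V_{n,|\bmm|}\prod_{i=1}^k S_\sigma(n_i,m_i)$.

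First I would make $\tilde\FP(\blam)$ explicit. By its definition, $\tilde\FP(\blam)=\FP(n_{11},\dots,n_{km_k})$ for any block-size vector with the prescribed profile $\blam$, so plugging in the Gibbs form gives $\tilde\FP(\blam)=V_{n,|\bmm|}\prod_{i=1}^k\prod_{c=1}^{m_i}(1-\sigma)_{n_{ic}-1}$; here the total number of blocks is $|\bmm|$ and their sizes sum to $n$, which fixes the two indices of $V$. Because within the $i$-th group exactly $\lambda_{ij}$ of the sizes $n_{ic}$ equal $j$, the product $\prod_{c=1}^{m_i}(1-\sigma)_{n_{ic}-1}$ depends only on $\blam_i$ and equals $\prod_{j=1}^{n_i}[(1-\sigma)_{j-1}]^{\lambda_{ij}}$. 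Moreover $|\bmm|=\sum_i m_i$ is constant over $\blam\in\Lambda(\bmm)$, so the scalar $V_{n,|\bmm|}$ factors out of the inner sum.

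The key step is then the factorization of the remaining sum. The constraints defining $\Lambda(\bmm)$, namely $\sum_j j\lambda_{ij}=n_i$ and $\sum_j\lambda_{ij}=m_i$, are imposed independently for each $i$, and both $\bc(\blam)$ and the product just obtained split as products over $i$; hence
\[
\sum_{\blam\in\Lambda(\bmm)}\bc(\blam)\prod_{i=1}^k\prod_{j=1}^{n_i}[(1-\sigma)_{j-1}]^{\lambda_{ij}}
=\prod_{i=1}^k\ \sum_{\substack{(\lambda_{i1},\dots,\lambda_{in_i})\\ \sum_j j\lambda_{ij}=n_i,\ \sum_j\lambda_{ij}=m_i}}\frac{n_i!}{\prod_j\lambda_{ij}!(j!)^{\lambda_{ij}}}\prod_{j=1}^{n_i}[(1-\sigma)_{j-1}]^{\lambda_{ij}}.
\]
Each factor on the right is exactly the left-hand side of \eqref{stirling} with $n=n_i$ and $k=m_i$, hence equals $S_\sigma(n_i,m_i)$. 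Reinstating the extracted $V_{n,|\bmm|}$ and summing against $H^{\#}(\bmm)$ over $\bmm\in\CM(\bn)$ yields the stated formula. The only point demanding care is checking that the group constraints genuinely decouple across $i$ so that the sum over $\Lambda(\bmm)$ factorizes into a product of single-group sums; once this is granted, the identification with $S_\sigma$ is immediate and there is no substantive analytic obstacle.
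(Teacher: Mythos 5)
Your proposal is correct and follows essentially the same route as the paper: substitute the Gibbs-type EPPF into Proposition \ref{prop_partition}, pull out the constant $V_{n,|\bmm|}$, factorize the sum over $\Lambda(\bmm)$ across the groups $i$ (which is legitimate since the constraints on $\blam_i$ are imposed separately for each $i$ and both $\bc(\blam)$ and the Pochhammer products split accordingly), and identify each single-group sum with $S_\sigma(n_i,m_i)$ via \eqref{stirling}. The paper performs exactly these steps, so there is nothing to add.
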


\begin{proof}
Combining Proposition \ref{prop_partition}
with \eqref{gibssEPPF} one gets
 \[
 \begin{split}
\P\{ \tilde \Pi_n=\pi_n \}  & = \sum_{ \bmm \in \CM({\bn}) } H^{\#}(\bmm)
V_{n,|\bmm|} 
\sum_{  \blam  \in \Lambda(\bmm) } 
  \prod_{i=1}^k    \prod_{j=1}^{n_i} [(1-\sigma)_{j-1}]^{\lambda_{i,j}} \frac{n_{i }!}{\prod_{j=1}^{n_{i }} \lambda_{ij}! (j!)^{\lambda_{ij}}    }\\
  & 
 =  \sum_{ \bmm \in \CM({\bn}) } H^{\#}(\bmm)
V_{n,|\bmm|}   \\ & \qquad \quad  \times \prod_{i=1}^k   
\sum_{ \substack{(\lambda_{i1},\dots,\lambda_{in_{i}}) \\  \sum_{j=1}^{n_{i}} j \lambda_{ij}=n_{i},  \sum_{j=1}^{n_{i}}  \lambda_{ij}=  m_{i}}}
  \prod_{j=1}^{n_i} [(1-\sigma)_{j-1}]^{\lambda_{i,j}} \frac{n_{i }!}{\prod_{j=1}^{n_{i }} \lambda_{ij}! (j!)^{\lambda_{ij}}    }
 \\
  & 
 =  \sum_{ \bmm \in \CM({\bn}) } H^{\#}(\bmm)
V_{n,|\bmm|}   \prod_{i=1}^k   S_\sigma(n_i,m_i).
\\
  \end{split}
\]
\end{proof}

\subsection{Species sampling sequences  with Spike and Slab base measure
}\label{S:spikeslab}
A spike-and-slab  measure is defined as 
 \begin{equation}\label{SLH0}
H(dx)=a \delta_{x_0}(dx)+ (1-a) H^c(dx)
\end{equation}
where $a \in (0,1)$, $x_0$ is a point of $\XX$ and $H^c$ is a diffuse measure on $\XX$. 
This type of  measures has been used as base measure  by 
 \cite{Dunson08,Vannucci2009,suarez2016,Cui2012,Barcella2016}
in the Dirichlet 
Process and by \cite{Canale2017} in the Pitman-Yor 
process. 

Here we deduce 
by Proposition \ref{prop_partition} the explicit form 
of  the EPPF of the random partition 
induced by a sequence sampled from a species sampling random probability with such a base measure.

\begin{proposition}\label{SpikeSlabEPPF} Let $H$ be as in \eqref{SLH0}, 
 $\tilde \Pi$ be the  random  partition induced by 
a $gSSS(\FP,H)$ and $\Pi$ be an exchangeable 
random partition with EPPF $\FP$. 
If 
$\pi_n=[\pi_{1,n}\dots,\pi_{k,n}]$ is a partition of $[n]$ with
$|\pi_{i,n}|= n_i$ ($i=1,\dots,k$), then 
\begin{equation}\label{tilde_p_dens}
\begin{split}
\P\{ &  \tilde \Pi_n=\pi_n\}  = (1-a)^{k}   \FP(n_1,\dots,n_k) \\ 
& +(1-a)^{k-1}
 \sum_{i=1}^k \FP(n_1,\dots, n_{i-1},n_{i+1},\dots ,n_k )
 \sum_{r=1}^{n_i}  a^r
 q_n(r|n_1,\dots, n_{i-1},n_{i+1},\dots ,n_k )
\end{split}
\end{equation}
  where, conditionally on the fact that $\Pi_{n-n_i}$ has $k-1$ blocks with sizes $n_1,\dots, n_{i-1},n_{i+1},\dots,n_k$, 
   the probability that $\Pi_n$ 
has $k-1+r$ blocks is denoted by $q_n(r|n_1,\dots, n_{i-1},n_{i+1},\dots,n_k )$. 
If in addition $\FP$ is of Gibbs-type \eqref{gibssEPPF}, then
\[
\begin{split}
\P\{  \tilde \Pi_n=\pi_n\}   & = (1-a)^{k}  V_{n,k} \prod_{j=1}^k (1-\sigma)_{n_j-1} \\
& 
 +(1-a)^{k-1}
 \sum_{i=1}^k   \prod_{j=1, j \not=i}^k (1-\sigma)_{n_j-1}
 \sum_{r=1}^{n_i}  a^r   V_{n,k-1+r} S_{\sigma}(n_i,r).  \\
\end{split}
\]
\end{proposition}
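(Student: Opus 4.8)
The plan is to argue directly from the stochastic representation (iii) of Proposition \ref{prop_gsss}, writing $\xi_n=Z'_{\SC_n(\Pi)}$ with $\Pi$ exchangeable of EPPF $\FP$ and $(Z'_j)_{j\ge 1}$ i.i.d.\ from the spike-and-slab $H$ of \eqref{SLH0}, independent of $\Pi$. The structural point I would isolate first is that, since $H$ has the single atom $x_0$ of mass $a$ and a diffuse part of mass $1-a$, two latent blocks share the same dish \emph{if and only if} both draw $x_0$, whereas a diffuse dish is a.s.\ distinct from every other dish. Hence the observed partition $\tilde\Pi$ is obtained from $\Pi$ by collapsing into one block exactly the latent blocks whose dish is $x_0$, while every diffuse latent block survives as a block of $\tilde\Pi$. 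In particular $\Pi$ refines $\tilde\Pi$, each diffuse observed block is a single latent block, no latent block straddles two observed blocks, and \emph{at most one} observed block (the one carrying the atom) can be a union of several latent blocks.

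Next I would classify the realizations of $\{\tilde\Pi_n=\pi_n\}$ according to which block, if any, is the atom block. This gives disjoint cases: either (Type $0$) no latent block drew $x_0$, which forces $\Pi_n=\pi_n$ with all $k$ dishes diffuse; or (Type $(i,r)$, with $1\le i\le k$ and $1\le r\le n_i$) the block $\pi_{i,n}$ is the union of exactly $r$ latent blocks all carrying $x_0$, while each $\pi_{j,n}$ with $j\ne i$ is a single latent block with a diffuse dish. By the independence of $\Pi$ and $(Z'_j)_j$ the dish-type pattern contributes the constant factor $(1-a)^k$ in Type $0$ and $(1-a)^{k-1}a^{r}$ in Type $(i,r)$, which factors out of the sum over latent partitions. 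Type $0$ then yields $(1-a)^k\,\P\{\Pi_n=\pi_n\}=(1-a)^k\FP(n_1,\dots,n_k)$. For Type $(i,r)$ I would sum $\P\{\Pi_n=\pi^*_n\}$ over all latent $\pi^*_n$ restricting to the fixed $(k-1)$-block partition $(\pi_{j,n})_{j\ne i}$ on $[n]\setminus\pi_{i,n}$ and cutting $\pi_{i,n}$ into $r$ blocks among its own elements; by the restriction property of an exchangeable EPPF the probability of the fixed restriction is $\FP(n_1,\dots,n_{i-1},n_{i+1},\dots,n_k)$, and, conditionally on it, the probability that the $n_i$ elements of $\pi_{i,n}$ open exactly $r$ further blocks is precisely the quantity $q_n(r\mid n_1,\dots,n_{i-1},n_{i+1},\dots,n_k)$ of the statement. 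Multiplying by the dish factor and summing over $i$ and $r$ reproduces the second line of \eqref{tilde_p_dens}; note that $r$ starts at $1$, the summand $r=1$ (the atom block being a single latent block, so $\Pi_n=\pi_n$ but one dish is $x_0$) being genuinely distinct from Type $0$.

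For the Gibbs refinement I would substitute \eqref{gibssEPPF}. Then $\FP((n_j)_{j\ne i})\,q_n(r\mid\cdots)$ equals $\sum \FP(\text{sizes})$ over set partitions of the $n_i$ elements of $\pi_{i,n}$ into $r$ blocks, which factors as $V_{n,k-1+r}\prod_{j\ne i}(1-\sigma)_{n_j-1}$ times $\sum_{\text{splits into }r}\prod_c(1-\sigma)_{s_c-1}$; this last sum is the left-hand side of \eqref{stirling} and equals $S_\sigma(n_i,r)$. Similarly Type $0$ becomes $(1-a)^k V_{n,k}\prod_j(1-\sigma)_{n_j-1}$, and collecting terms gives the stated Gibbs formula.

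The main obstacle is the bookkeeping that makes the case decomposition exhaustive and disjoint together with the identification of the partition factor $\FP((n_j)_{j\ne i})\,q_n(r\mid\cdots)$: one must argue that $\Pi$ refines $\tilde\Pi$ with no straddling latent block, that at most one observed block is an atom block, and that the sum over admissible latent partitions splits as the restriction probability times the conditional $r$-block probability $q_n$. The remaining steps, including the Gibbs reduction via \eqref{stirling}, are routine. As an alternative I would mention specializing Proposition \ref{prop_partition} directly: for the spike-and-slab $H$ one checks that $H^{\#}(\bmm)$ vanishes unless $\bmm$ has at most one coordinate exceeding $1$, so that $H^{\#}((1,\dots,1))=(1-a)^k+ka(1-a)^{k-1}$ and $H^{\#}(\bmm)=(1-a)^{k-1}a^{r}$ when a single coordinate equals $r\ge 2$; reorganizing the $ka(1-a)^{k-1}$ piece as the $r=1$ contribution recovers the same expression.
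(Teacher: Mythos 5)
Your argument is correct, and its endpoint coincides with the paper's, but your main line of attack is organized differently. The paper proves the proposition by pure specialization: it plugs the spike-and-slab $H$ into the general EPPF formula of Proposition \ref{prop_partition}, observing that $H^{\#}(\bmm)$ vanishes unless at most one coordinate of $\bmm$ exceeds $1$, computing $H^{\#}(1,\dots,1)=(1-a)^k+ka(1-a)^{k-1}$ and $H^{\#}(m,1,\dots,1)=a^m(1-a)^{k-1}$, and then regrouping the $ka(1-a)^{k-1}$ piece as the $r=1$ term --- exactly the ``alternative'' you sketch in your last sentences, with the same $H^{\#}$ values. Your primary route instead re-derives the spike-and-slab case from scratch out of representation (iii) of Proposition \ref{prop_gsss}, via the disjoint classification by the identity of the (at most one) atom block and the number $r$ of latent blocks it absorbs; this is in substance the same decomposition that underlies Proposition \ref{prop_partition}, but carried out directly, so it is more self-contained and makes the probabilistic meaning of each term (in particular of $q_n(r\mid\cdots)$ as the conditional probability that the $n_i$ atom-valued observations open exactly $r$ latent blocks) transparent, at the cost of redoing the bookkeeping that Proposition \ref{prop_partition} already packages. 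Your identification of the Type-$(i,r)$ partition factor as $\FP(n_1,\dots,n_{i-1},n_{i+1},\dots,n_k)\,q_n(r\mid\cdots)$ via consistency of the EPPF, and the Gibbs reduction via \eqref{stirling}, match the paper's computation $q_n(r\mid\cdots)=V_{n,k-1+r}S_\sigma(n_i,r)/V_{n-n_i,k-1}$. Both approaches are sound; either would serve as a proof.
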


\begin{proof}
In this case  
$H^\#(\bmm)=0$ if $m_i\geq 2$ and  $m_j \geq 2$ for some $i \not =j$ because $H$ has only one atom. 
Moreover, $H^\#(\bmm)$ is clearly symmetric and 
\[
H^\#(1,1,1,\dots,1)=(1-a)^k+k(1-a)^{k-1}a
\]
\[
H^\#(m,1,\dots,1)=a^m(1-a)^{k-1} \qquad \text{for $m >1$}.
\]
By Proposition \ref{prop_partition} 
\[
\begin{split}
\P\{  \tilde \Pi_n=\pi_n\}   & = [(1-a)^{k}+k(1-a)^{k-1}a]\FP(n_1,\dots,n_k) 
\\
& 
 +(1-a)^{k-1}\sum_{i=1}^k\sum_{m_i=2}^{n_i}a^{m_i}\sum_{  \blam  \in \Lambda(\bmm) } c(\blam) \tilde \FP (\blam) \\
 &=[(1-a)^{k}+k(1-a)^{k-1}]\FP(n_1,\dots,n_k)+\\
 &+(1-a)^{k-1}\sum_{i=1}^k\sum_{r=2}^{n_i}a^r\sum_{
  \substack{
 \blam\in \Lambda(\bmm)  \, \text{for $ \bmm$:} \\  m_i=r,\ m_j=1, j\neq i} 
 }  
 c(\blam)  \FP (n_1,\dots, n_{i-1}, {\mathbf n}_r^{(i)},n_{i+1},\dots, n_k) \\
 &=(1-a)^{k}\FP(n_1,\dots,n_k)\\
 &+(1-a)^{k-1}\sum_{i=1}^k\sum_{r=1}^{n_i}a^r\sum_{ \substack{
 \blam\in \Lambda(\bmm)  \, \text{for $ \bmm$:} \\  m_i=r,\ m_j=1, j\neq i} }  c(\blam)  \FP (n_1,\dots, n_{i-1}, {\mathbf n}_r^{(i)},n_{i+1},\dots, n_k) 
\end{split}
\]
where ${\mathbf n}_r^{(i)}$ is any vector of $r$ positive integers with sum $n_i$ and such that $\lambda_{ij}$ of them are equal to $j$. In view of the definition of $c(\blam)$, formula \eqref{tilde_p_dens} is immediately obtained. 

If  $\FP$ is of Gibbs-type, taking into account \eqref{stirling}, then 
\[
 q_n(r|n_1,\dots, n_{i-1},n_{i+1},\dots,n_k )=\frac{V_{n,k-1+r}}{V_{n-n_i,k-1}} S_{\sigma}(n_i,r)
\]
and the second part of the thesis follows by simple algebra. 
\end{proof}

Applying Proposition \ref{SpikeSlabEPPF} to the Pitman-Yor  EPPF    defined in \eqref{PYeppf}, 
one immediately  recovers the results stated in Theorem 1 and Corollary 1 of  \cite{Canale2017}.

\section{Asymptotics distribution of the number of clusters}

An exchangeable random partition $\Pi=(\Pi_n)_{n \geq 1}$ is said to have  asymptotic 
diversity $S$ if  
\begin{equation}\label{sigmadiversity}
   \frac{|\Pi_n|}{c_n }\to S \quad a.s. 
\end{equation}
for  a strictly  positive random variable $S$ and a suitable (deterministic) normalizing  sequence $(c_n)_{n\geq 1}$. This definition  
generalizes   
the concept of $\sigma$-diversity, which is  \eqref{sigmadiversity} for $c_n=n^{\sigma}$,
see  Definition 3.10 in  \cite{Pit06}. 
There are important examples of exchangeable random partitions in which 
$c_n=n^{\sigma} \ell(n)$ for some $\ell$ slowly varying  at infinity. 
In particular,  if    $(\Pi_n)_{n \geq 1}$ is an exchangeable random partition with EPPF of  Gibbs-type 
\eqref{gibssEPPF}, then  \eqref{sigmadiversity}
holds with 
 \[
c_n:=\left \{ 
 \begin{array}{ll}
1 & \text{if $\sigma<0$} \\
\log(n) & \text{if $\sigma=0$} \\
n^\sigma & \text{if $0<\sigma<1$}, \\
 \end{array}
 \right . 
 \]
 see Section 6.1 of \cite{Pitman2003}. 

In this Section we investigate the asymptotic diversity 
for a  random  partition $\tilde \Pi=(\tilde \Pi_n)_{n \geq 1}$  induced by a  $gSSS(\FP,H)$ 
for a general $H$.  

By Proposition \ref{prop_gsss}, we may assume that  $\tilde \Pi$ is the random partition induced by 
a sequence
\[
(\xi_n)_{n\geq 1}=(Z_{\SC_n(\Pi)})_{n \geq 1}
\]
where 
  $\Pi=(\Pi_n)_{n \geq 1}$ is a random partition with EPPF equal to $\FP$,
and $(Z_n)_{n \geq 1}$ is a sequence of i.i.d. random variables with distribution $H$, independent from 
$\Pi$.

Recalling \eqref{acca},  we also write 
 \begin{equation}
 H(dx)= a  H^{d}(dx)+ (1-a) H^{c}(dx)
\end{equation}
where, if $a>0$,  
\[
H^{d}(dx) =\sum_{i=1}^{+\infty} \frac{\bar a_i}{a} \delta_{\bar x_i}(dx).
\]
%

 For $a=0$ we recover the classical case of a diffuse base measure were $|\tilde \Pi_n|=|\Pi_n|$ a.s..  
Since this  case is well studied, from now on we assume $a>0$. 

Set
 \[
 K_n=| \Pi_n| \qquad \text{and}\qquad
N_n=\sum_{j=1}^{K_n} \J\{Z_j \in \XX_0^c\}.
\]
Hence  $N_n$ is 
 the
 random number of elements in $(Z_1,\dots,Z_{K_n})$ sampled from 
the diffuse component $H^c$ and  $K_n-N_n$ is  the number of elements sampled from the discrete component $H^{d}$.
 
Let $\delta_1,\delta_2,\dots$ be the indexes corresponding 
 to the $Z_j \in \XX_0$, i.e.
 \[
\delta_1=\inf\{i:Z_i \in \XX_0\}, \quad \delta_k=\inf\{i > \delta_{k-1}:Y_i\in \XX_0\} \quad k\geq 2. 
 \]
For any set of points $(x_1,\dots,x_n)$ in $\XX$, let 
 $\Lambda(x_1,\dots,x_n)$ be the number of different elements in $(x_1,\dots,x_n)$,
and define
 \[
\Lambda_n=  \Lambda(Z_{\delta_1},\dots,Z_{\delta_{K_n-N_n}}).
 \]
 One can check that 
 \begin{equation}\label{identity1}
|\tilde \Pi_n| 
=N_n+ \Lambda_n \quad \text{a.s.}.
\end{equation}

Note  that if $a=1$ then $N_n=0$  and, with probability one, 
$
|\tilde \Pi_n|  = \Lambda(Z_1,\dots,Z_{K_n})$.

It is easy to determine the asymptotic behavior of the first term in \eqref{identity1}, if \eqref{sigmadiversity} holds true.
Using the fact that $K_n \to +\infty$, 
the Strong Law of Large Numbers gives
\[
\lim_n \frac{1}{K_n} \sum_{j=1}^{K_n} \J\{Z_j \in \XX_0^c\}= \E[\J\{Z_1 \in \XX_0^c\}]=(1-a) \quad \text{a.s.}.\] Since
 \[
\frac{N_n}{c_n}= \frac{K_n}{c_n}  \frac{1}{K_n} \sum_{j=1}^{K_n} \J\{Z_j \in \XX_0^c\}
\]
by \eqref{sigmadiversity} one obtains, for $n\to +\infty$, 
 \begin{equation}\label{convND}
 \frac{N_n}{c_n}\to (1-a )S \quad \text{a.s.}
\end{equation}

%

This allows to easily obtain a first convergence result in the case
 $\XX_0=\{\bar x_1,\dots,\bar x_M\}$ is a finite set, as happens for spike and slab base measures 
described in Subsection \ref{S:spikeslab}.

\begin{proposition}\label{PropAs1}
 Assume that 
\eqref{sigmadiversity} holds true with $c_n \to +\infty$
and $|\XX_0|<+\infty$, then 
\begin{equation}\label{convtildeKn} 
\frac{|\tilde \Pi_n|}{c_n} \to (1-a)S \quad \text{a.s.  if  $a<1$}
\end{equation}
and 
\begin{equation}\label{convtildeKnfinito2} 
{|\tilde \Pi_n|} \to |\XX_0| \quad \text{a.s.  if  $a=1$}.
\end{equation}
\end{proposition}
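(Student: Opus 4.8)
The plan is to reduce everything to the almost sure decomposition \eqref{identity1}, namely $|\tilde \Pi_n|=N_n+\Lambda_n$, since the asymptotics of the first summand are already available from \eqref{convND}. The only new ingredient required is control of $\Lambda_n$, the number of distinct values appearing among those $Z_j$, $j\le K_n$, that fall in $\XX_0$. The key observation I would record first is the deterministic bound
\[
0\le \Lambda_n \le |\XX_0|<+\infty \qquad \text{for every }n,
\]
which holds simply because $\Lambda_n$ counts distinct elements of a set of cardinality $|\XX_0|$.

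For the case $a<1$ the argument is then immediate: dividing \eqref{identity1} by $c_n$, the term $N_n/c_n$ converges a.s.\ to $(1-a)S$ by \eqref{convND}, while $0\le \Lambda_n/c_n \le |\XX_0|/c_n \to 0$ because $c_n\to+\infty$. Adding the two limits gives \eqref{convtildeKn}. For the case $a=1$ I would instead use that $N_n=0$ a.s.\ (all the $Z_j$ lie in $\XX_0$), so that $|\tilde\Pi_n|=\Lambda_n=\Lambda(Z_1,\dots,Z_{K_n})$. Here the point is that $K_n=|\Pi_n|\to+\infty$ a.s., which follows from $K_n/c_n\to S>0$ together with $c_n\to+\infty$. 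Since the $Z_j$ are i.i.d.\ and each atom $\bar x_i\in\XX_0$ has mass $H(\bar x_i)>0$, the second Borel--Cantelli lemma (or the SLLN applied to each atom) shows that, almost surely, every one of the finitely many atoms is sampled infinitely often; composing with $K_n\to+\infty$, for all $n$ large enough the whole of $\XX_0$ is represented among $Z_1,\dots,Z_{K_n}$, so $\Lambda_n=|\XX_0|$ eventually. This yields \eqref{convtildeKnfinito2}.

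There is no substantial obstacle in this proposition: the whole difficulty has already been absorbed into establishing \eqref{identity1} and \eqref{convND}, and the finiteness of $\XX_0$ makes the $\Lambda_n$ contribution asymptotically negligible in the regime $a<1$ and exactly identifiable in the regime $a=1$. The only points that deserve a careful (though routine) a.s.\ justification are that $K_n\to+\infty$ almost surely and that all finitely many atoms are eventually observed; both are immediate once $c_n\to+\infty$ and the positivity of each atom's mass are invoked.
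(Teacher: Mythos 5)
Your proof is correct and follows essentially the same route as the paper: the decomposition \eqref{identity1} together with \eqref{convND}, the bound $\Lambda_n\le|\XX_0|$ killing the second term when $a<1$, and the observation that all finitely many atoms are eventually sampled (the paper phrases this as $K_n-N_n\to+\infty$ implying $\Lambda_n\to|\XX_0|$) for the case $a=1$.
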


\begin{proof}
 From  \eqref{convND} one deduces that $K_n-N_n \to +\infty$, hence 
${\Lambda_{n}} \to |\XX_0|<+\infty$ a.s.. If $a=1$, 
\eqref{convtildeKnfinito2} follows since in this case $|\tilde \Pi_n|=\Lambda_{n}$.
If
$0<a<1$,  \eqref{convtildeKn}
follows from \eqref{convND} and  \eqref{identity1} since 
${\Lambda_{n}}/{c_n} \to 0$ a.s.. 
\end{proof}

The next results describe the situation in which $H^d$ is supported by an infinite set. In this case the asymptotic behaviour of
$|\tilde \Pi_n|$ is related to the behaviour of the number of different elements in an i.i.d. sample from $H^d$. 
Define
\[
Z_j^*= \left \{
\begin{array}{cc}
Z_j  & \text{if $Z_j \in \XX_0$} \\
\bar x_1 & \text{if $Z_j \not \in \XX_0$}  \\  
\end{array}
\right . 
\]
and set 
\[
L_n :=\Lambda(Z_1^*,\dots,Z_n^*). 
\]

In what follows we need the following  assumption:
%
%
%
\begin{equation*}(\text{\bf H})\label{hp}
\begin{split}
&\text{$L_n/b_n \to z_0>0$ a.s. for 
$b_n=n^{\sigma_0} \ell_0(n)$,} \\
& \text{where
$\sigma_0 \in [0,1]$ and  $\ell_0$ is 
slowly varying at $+\infty$ such that}\\ 
&\text{$\ell_0(n) \to 0$ as $n \to +\infty$  if $\sigma_0=1$,
$\ell_0(n) \to +\infty$ as $n \to +\infty$ if $\sigma_0=0$. }
\end{split}
\end{equation*}  

\begin{lemma}\label{proptildeK_n} 
Assume 
\eqref{sigmadiversity} with $c_n \to +\infty$ and {\rm({\bf H})}.
Then 
\begin{equation}\label{convtildeKn2}
\begin{split}
&\frac{|\tilde \Pi_n|}{c_n} \to (1-a)S\quad \text{a.s. if }\quad 0<a<1;\\ 
&\frac{|\tilde \Pi_n|}{b_{c_n}} \to  z_0 S^{\sigma_0}  \quad \text{a.s. if } \quad a=1.
\end{split}
\end{equation}
\end{lemma}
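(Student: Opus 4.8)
The plan is to work from the almost-sure decomposition \eqref{identity1}, namely $|\tilde\Pi_n| = N_n + \Lambda_n$, treating the two summands separately and exploiting that the diffuse term $N_n$ is already controlled by \eqref{convND}. The key preliminary step I would carry out is to relate the discrete-diversity term $\Lambda_n$ to the quantity $L_m$ of hypothesis {\rm({\bf H})}. Since $\Lambda_n$ counts the distinct atoms of $\XX_0$ occurring among $Z_1,\dots,Z_{K_n}$, while $L_{K_n}=\Lambda(Z_1^*,\dots,Z_{K_n}^*)$ counts the same distinct atoms after lumping every diffuse draw into $\bar x_1$, the two differ by at most one (the possible extra count of $\bar x_1$). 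Hence $0\le L_{K_n}-\Lambda_n\le 1$, and in particular $\Lambda_n=L_{K_n}$ exactly when $a=1$, since then no diffuse draws occur. Because $c_n\to+\infty$ together with $K_n/c_n\to S>0$ force $K_n\to+\infty$, hypothesis {\rm({\bf H})} applies along the random indices $K_n$, giving $L_{K_n}/b_{K_n}\to z_0$ a.s.

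For the case $0<a<1$, I would show that $\Lambda_n/c_n\to 0$, so that \eqref{identity1} and \eqref{convND} combine to yield $|\tilde\Pi_n|/c_n\to(1-a)S$. Using $0\le\Lambda_n\le L_{K_n}$ and $L_{K_n}\sim z_0 b_{K_n}$, it suffices to check that $b_{K_n}/c_n\to 0$. Writing $b_{K_n}/c_n=(K_n/c_n)^{\sigma_0}\,c_n^{\sigma_0-1}\,\ell_0(K_n)$ and using $K_n/c_n\to S$, the claim reduces to $c_n^{\sigma_0-1}\ell_0(K_n)\to 0$: when $\sigma_0<1$ this holds because $c_n^{\sigma_0-1}\to 0$ dominates the sub-polynomial growth of any slowly varying factor; when $\sigma_0=0$ it holds because $\ell_0(K_n)/c_n\to 0$; and when $\sigma_0=1$ it holds because {\rm({\bf H})} imposes $\ell_0(n)\to 0$. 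Thus the discrete diversity is of smaller order than $c_n$ in every admissible case, and the squeeze $0\le \Lambda_n/c_n\le L_{K_n}/c_n\to 0$ finishes this case.

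For the case $a=1$ one has $N_n=0$, hence $|\tilde\Pi_n|=\Lambda_n=L_{K_n}$ exactly, and I would derive the target from the factorization
\[
\frac{|\tilde\Pi_n|}{b_{c_n}}=\frac{L_{K_n}}{b_{K_n}}\cdot\frac{b_{K_n}}{b_{c_n}}.
\]
The first factor tends to $z_0$ by {\rm({\bf H})}. For the second, $b_{K_n}/b_{c_n}=(K_n/c_n)^{\sigma_0}\,\ell_0(K_n)/\ell_0(c_n)$, and since $K_n/c_n\to S$ the power term converges to $S^{\sigma_0}$. The main obstacle is the slowly varying ratio $\ell_0(K_n)/\ell_0(c_n)$ with a random argument: here I would invoke the uniform convergence theorem for slowly varying functions, which gives $\ell_0(\lambda x)/\ell_0(x)\to 1$ uniformly for $\lambda$ in compact subsets of $(0,+\infty)$. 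Since $K_n/c_n$ eventually lies a.s. in a compact neighbourhood of $S>0$, this yields $\ell_0(K_n)/\ell_0(c_n)\to 1$ a.s., so the second factor tends to $S^{\sigma_0}$ and $|\tilde\Pi_n|/b_{c_n}\to z_0 S^{\sigma_0}$, as required.
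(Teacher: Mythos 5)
Your proposal is correct and follows essentially the same route as the paper's proof: the decomposition $|\tilde \Pi_n|=N_n+\Lambda_n$, the sandwich $L_{K_n}-1\leq \Lambda_n\leq L_{K_n}$, hypothesis ({\bf H}) applied along the random indices $K_n\to+\infty$, and the invariance of slowly varying functions under the ratio $\ell_0(K_n)/\ell_0(c_n)\to 1$. The only cosmetic difference is that for $0<a<1$ you bound $\Lambda_n$ by $b_{K_n}$ and show $b_{K_n}/c_n\to 0$ directly, whereas the paper first proves $\Lambda_n/b_{c_n}\to z_0S^{\sigma_0}$ and then multiplies by $b_{c_n}/c_n=c_n^{\sigma_0-1}\ell_0(c_n)\to 0$; the two estimates are equivalent.
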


\begin{proof} 
Write
\[
\frac{L_{K_n}}{b_{c_n}}= \frac{L_{K_n}}{b_{K_n}}\frac{b_{K_n}}{b_{c_n}} 
 = \frac{L_{K_n}}{b_{K_n}}  \left( \frac{{K_n}}{{c_n}} \right )^{\sigma_0} 
\frac{\ell_0 \left(\frac{{K_n}}{c_n} c_n  \right)}{\ell_0({c_n})}. 
\]
Since,   $K_n/c_n \to S>0 $ a.s. and  $K_n \to +\infty$, 
 using the fact that $L_n/b_n \to z_0$ a.s. one gets
that 
\[
\frac{L_{K_n}}{b_{K_n}}  \to z_0 \quad \text{and} \quad   \left( \frac{{K_n}}{{c_n}} \right )^{\sigma_0}  
\to S^{\sigma_0}\quad \text{a.s..}
\]
Recalling that for any slowly varying function 
$\ell_0(x_n y_n)/\ell_0(y_n) \to 1$ whenever $y_n \to +\infty$ and $x_n \to x>0$ (see
Theorem B.1.4 in \cite{deHaanFerreira}), one obtains
\[
\frac{\ell_0 \left(\frac{{K_n}}{c_n} c_n  \right)}{\ell_0({c_n})} \to 1 \quad a.s. 
\]
In conclusion, 
\begin{equation}\label{asymLKn}
\frac{L_{K_n}}{b_{c_n}} \to  z_0 S^{\sigma_0}  \quad a.s. 
\end{equation}
Since $L_{K_n}-1 \leq \Lambda_n \leq L_{K_n}$,
and $b_{c_n} \to +\infty$,  \eqref{asymLKn}
yields
\begin{equation}\label{convL}
\frac{\Lambda_{n}}{{b_{c_n}}} \to  z_0 S^{\sigma_0}  \quad \text{a.s.}
\end{equation}
Assume $a<1$ and write  
\[
\frac{|\tilde \Pi_n|}{c_n} = \frac{N_n}{c_n} + \frac{\Lambda_{n}}{b_{c_n}} \frac{b_{c_n}}{c_n}.
\]
We know that  $N_n/c_n \to (1-a)S$ a.s. (see \eqref{convND}), so that  combining \eqref{convL}
with 
\[
\frac{b_{c_n}}{c_n}= c_n^{\sigma_0-1} \ell_0(c_n) \to 0
\] 
we get the thesis. 
If $a=1$, $|\tilde \Pi_n|=\Lambda_n$ and the thesis is \eqref{convL}. 
\end{proof}

The main result  on $|\tilde \Pi_n|$  is Theorem \ref{PropAs2} below, 
obtained combining Lemma \ref{proptildeK_n} and well-known results on the 
number of  occupied cells in urn schemes obtained by  \cite{Karlin67}
and reviewed in Proposition \ref{lemmakarlin} in the Appendix. 
For every $x>0$, define
\begin{equation}\label{defalpha}
\alpha(x) :=\#\{ j : a_j \geq 1/x\}.
\end{equation}

 Combining Lemma \ref{proptildeK_n} and Proposition \ref{lemmakarlin}, one easily obtain the following result.

\begin{thm}\label{PropAs2}
Assume that 
\eqref{sigmadiversity} holds true with $c_n \to +\infty$
and that $\alpha(x)=x^{\sigma_0} \ell_0^*(x)$
where $0 \leq \sigma_0 \leq 1$  and  $\ell_0^*$ is a slowly varying function at $+\infty$ {\rm(}with
$\lim_{x \to +\infty}  \ell_0^*(x)=+\infty$ if $\sigma_0=0${\rm)}. 
Then, 
\begin{equation}\label{convtildeKn3} 
\frac{|\tilde \Pi_n|}{c_n} \to (1-a)S \quad \text{a.s.  if  $a<1$}
\end{equation}
and
 \begin{equation}\label{convtildeKn4} 
\frac{|\tilde \Pi_n|}{{c_n^{\sigma_0} \ell_0(c_n) } } \to  z_0 S^{\sigma_0}  \quad \text{a.s. if $a=1$}
\end{equation}
where
\begin{itemize}
\item 
$\ell_0(x)=\ell_0^*(x)$ and $z_0= \Gamma(1-\sigma_0)$ if $0 \leq \sigma_0 <1$;
\item  $\ell_0(x)=\int_{x}^{+\infty} u^{-1} \ell_0^*(u)du<+\infty$  and $z_0=1$ if  $\sigma_0=1$. 
\end{itemize}
\end{thm}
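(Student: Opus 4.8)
The plan is to read Theorem \ref{PropAs2} as a specialization of Lemma \ref{proptildeK_n}, in which the abstract hypothesis (\textbf{H}) on the growth of $L_n$ is produced from the concrete regular-variation assumption on $\alpha$ by invoking the occupancy results of Karlin collected in Proposition \ref{lemmakarlin}. Thus the whole argument splits into two moves: first, recognize that the sequence $L_n=\Lambda(Z_1^*,\dots,Z_n^*)$ is exactly the number of occupied boxes in a Karlin infinite-urn scheme whose frequencies are the atom masses of $H$; second, feed the conclusion of Proposition \ref{lemmakarlin} into Lemma \ref{proptildeK_n}.

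First I would identify the urn scheme. By construction $Z_n^*$ takes the value $\bar x_i$ with probability $\bar a_i$ for $i\geq 2$ and the value $\bar x_1$ with probability $\bar a_1+(1-a)$, so the $(Z_n^*)_n$ are i.i.d. draws from a genuine probability distribution on the countable set $\XX_0$, and $L_n$ counts the distinct symbols observed in the first $n$ draws. The counting function of these frequencies differs from $\alpha$ only through the single perturbed mass $\bar a_1+(1-a)$ in place of $\bar a_1$; since this alters the count in \eqref{defalpha} by at most one for every $x$, it leaves the regular-variation index $\sigma_0$ and the slowly varying factor of $\alpha$ unchanged, and it changes $L_n$ by at most a constant. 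Hence the hypothesis $\alpha(x)=x^{\sigma_0}\ell_0^*(x)$ applies verbatim to this urn scheme.

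Next I would apply Proposition \ref{lemmakarlin}. In the regime $0\leq \sigma_0<1$ it gives $L_n/(n^{\sigma_0}\ell_0^*(n))\to \Gamma(1-\sigma_0)$ a.s., so (\textbf{H}) holds with $b_n=n^{\sigma_0}\ell_0(n)$, $\ell_0=\ell_0^*$ and $z_0=\Gamma(1-\sigma_0)$; the side conditions of (\textbf{H}) are immediate, since $\ell_0^*\to+\infty$ is assumed when $\sigma_0=0$ and nothing further is required for $0<\sigma_0<1$. In the boundary regime $\sigma_0=1$ the proposition yields the normalization $b_n=n\,\ell_0(n)$ with $\ell_0(x)=\int_x^{+\infty} u^{-1}\ell_0^*(u)\,du$ and $z_0=1$; here I would use Karamata's theorem (as in Theorem B.1.4 of \cite{deHaanFerreira}) to check that this integral converges, that $\ell_0$ is slowly varying, and that $\ell_0(x)\to 0$, so that (\textbf{H}) is again met with $\sigma_0=1$. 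This boundary case, where the slowly varying factor is replaced by the tail of a convergent integral and the limit constant collapses to $1$, is the only genuinely delicate point; everything else is bookkeeping across the two regimes, together with the routine observation that $b_n\to+\infty$ in all cases.

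Finally, with (\textbf{H}) verified, Lemma \ref{proptildeK_n} applies directly. For $a<1$ it delivers $|\tilde\Pi_n|/c_n\to (1-a)S$ a.s., which is \eqref{convtildeKn3}, irrespective of the precise value of $\sigma_0$; for $a=1$ it delivers $|\tilde\Pi_n|/b_{c_n}\to z_0 S^{\sigma_0}$ a.s., which is \eqref{convtildeKn4} once one substitutes $b_{c_n}=c_n^{\sigma_0}\ell_0(c_n)$ and the above values of $\ell_0$ and $z_0$ identified in each regime. I expect no difficulty in this last step, as it is a verbatim citation of the lemma.
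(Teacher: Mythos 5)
Your proof follows exactly the paper's route: the paper's entire argument for Theorem \ref{PropAs2} is the one-line remark that it is obtained by combining Lemma \ref{proptildeK_n} with Proposition \ref{lemmakarlin}, and your two-step verification (identify $L_n$ as the occupied-box count of a Karlin urn scheme so that Proposition \ref{lemmakarlin} delivers hypothesis ({\bf H}), then substitute $b_{c_n}=c_n^{\sigma_0}\ell_0(c_n)$ and the stated $z_0$ into the lemma) is precisely that combination, spelled out. The one small inaccuracy is attributing the finiteness of $\int_x^{+\infty}u^{-1}\ell_0^*(u)\,du$ in the case $\sigma_0=1$ to Karamata's theorem: it does not follow from slow variation alone (take $\ell_0^*\equiv 1$) but is part of the conclusion of Proposition \ref{lemmakarlin}, ultimately coming from summability of the atom masses, so you should simply cite it rather than re-derive it.
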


The last result of this Section (see Theorem \ref{PropAs3}) concerns the asymptotic behaviour  of 
 the number of blocks  with $r$ elements in $\tilde \Pi_n$, i.e. 
\[
\mathcal{K}_{r}(\tilde \Pi_n):= \#\{ j=1,\dots,|\tilde \Pi_n|: |\tilde \Pi_{j,n}|=r  \},
\]
for any $r=1,2,\dots,n$. We start with

\begin{lemma}\label{lemmaKnr}
Assume that \eqref{sigmadiversity} holds true with $c_n=n^{\sigma} \ell(n)$
where $0 < \sigma < 1$  and  $\ell$ is a deterministic slowly varying function at $+\infty$. 
Assume also that {\rm({\bf H})} holds. 
Then, if $a<1$, for every $r\geq 1$, 
\[
\frac{\mathcal{K}_{r}(\tilde \Pi_n) }{c_n} \to (1-a)  \frac{\sigma\Gamma(r-\sigma)}{\Gamma(1-\sigma) r!}S  \quad \text{a.s.}
\] 
\end{lemma}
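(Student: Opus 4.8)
The plan is to reduce the asymptotics of $\mathcal{K}_{r}(\tilde \Pi_n)$ to that of the \emph{latent} partition $\Pi$, using that for $a<1$ the blocks of $\tilde\Pi_n$ created by merging latent blocks with a common atom are asymptotically negligible with respect to $c_n$. By Proposition \ref{prop_gsss}(iii) I work with $\xi_n=Z_{\SC_n(\Pi)}$, where $\Pi$ has EPPF $\FP$ and $(Z_j)_{j\ge 1}$ are i.i.d.\ $H$, independent of $\Pi$. The key structural remark is that a latent block whose label $Z_j$ falls in $\XX_0^c$ carries, almost surely, a value shared by no other block, so it survives as a block of $\tilde\Pi_n$ of the same size; every other block of $\tilde\Pi_n$ is the union of latent blocks sharing one atom of $H$.

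First I would set $\epsilon_j=\J\{Z_j\in\XX_0^c\}$, which are i.i.d.\ Bernoulli of mean $1-a$ and independent of $\Pi$, and write $B_j(n)$ for the size of the $j$-th latent block at stage $n$. Defining
\[
\mathcal{K}^{\mathrm{diff}}_{r}(\Pi_n):=\sum_{j=1}^{K_n}\J\{B_j(n)=r\}\,\epsilon_j ,
\]
the structural remark gives $\mathcal{K}_{r}(\tilde\Pi_n)=\mathcal{K}^{\mathrm{diff}}_{r}(\Pi_n)+R_n$, where $R_n$ counts the size-$r$ blocks of $\tilde\Pi_n$ produced by atom merges, so that $0\le R_n\le \Lambda_n$. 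Since \eqref{convL} in the proof of Lemma \ref{proptildeK_n} gives $\Lambda_n/b_{c_n}\to z_0 S^{\sigma_0}$ while $b_{c_n}/c_n=c_n^{\sigma_0-1}\ell_0(c_n)\to 0$, I get $\Lambda_n=o(c_n)$ a.s., hence $R_n/c_n\to 0$ and only the diffuse part matters.

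Next I would invoke the standard block-size frequencies for partitions with $\sigma$-diversity (\cite{Pit06}): for $0<\sigma<1$ one has $\mathcal{K}_{r}(\Pi_n)/K_n\to \sigma(1-\sigma)_{r-1}/r!$ a.s., and combined with $K_n/c_n\to S$ from \eqref{sigmadiversity} this gives
\[
\frac{\mathcal{K}_{r}(\Pi_n)}{c_n}\longrightarrow \frac{\sigma(1-\sigma)_{r-1}}{r!}\,S=\frac{\sigma\Gamma(r-\sigma)}{\Gamma(1-\sigma)\,r!}\,S\quad\text{a.s.},
\]
so in particular $\mathcal{K}_{r}(\Pi_n)\to+\infty$. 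It then suffices to prove the thinning identity $\mathcal{K}^{\mathrm{diff}}_{r}(\Pi_n)/\mathcal{K}_{r}(\Pi_n)\to 1-a$ a.s.; multiplying the two limits and adding the vanishing term $R_n/c_n$ yields the thesis.

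The delicate point is precisely this thinning limit, because the index set $\{j:B_j(n)=r\}$ varies with $n$, so it is not a partial-sum strong law. I would argue conditionally on $\Pi$: there $\mathcal{K}^{\mathrm{diff}}_{r}(\Pi_n)$ is a sum of $\mathcal{K}_{r}(\Pi_n)$ independent Bernoulli$(1-a)$ variables with deterministic $\{0,1\}$ weights, whence Hoeffding's inequality gives
\[
\P\Big(\big|\mathcal{K}^{\mathrm{diff}}_{r}(\Pi_n)-(1-a)\mathcal{K}_{r}(\Pi_n)\big|\ge \delta\,\mathcal{K}_{r}(\Pi_n)\,\Big|\,\Pi\Big)\le 2\exp\!\big(-2\delta^2\mathcal{K}_{r}(\Pi_n)\big).
\]
On the probability-one set where $S>0$ and $\mathcal{K}_{r}(\Pi_n)$ eventually exceeds a fixed positive multiple of $c_n=n^{\sigma}\ell(n)$, these bounds are summable in $n$ by the polynomial growth of $c_n$, so a conditional Borel--Cantelli argument, made joint via Fubini over the independent randomness of $(\epsilon_j)_j$, delivers $\mathcal{K}^{\mathrm{diff}}_{r}(\Pi_n)/\mathcal{K}_{r}(\Pi_n)\to 1-a$ a.s. This concentration-plus-Borel--Cantelli step is the main obstacle; everything else is bookkeeping.
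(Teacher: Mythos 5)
Your proposal is correct and matches the paper's proof in its overall architecture: the same decomposition of $\mathcal{K}_{r}(\tilde \Pi_n)$ into the size-$r$ latent blocks labelled by the diffuse component (which a.s.\ survive unmerged) plus a remainder bounded by $\Lambda_n$, the same observation that $\Lambda_n = o(c_n)$ a.s.\ (imported from the proof of Lemma \ref{proptildeK_n}), the same appeal to the Gnedin--Pitman asymptotics for $\mathcal{K}_{r}(\Pi_n)/c_n$ (Proposition \ref{lemma3}), and the same device of conditioning on $\sigma(\Pi)$ so that the index set $\{j : |\Pi_{j,n}|=r\}$ becomes deterministic and only the i.i.d.\ indicators $\J\{Z_j \in \XX_0^c\}$ remain random. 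The one genuine divergence is the tool used for the thinning step. The paper writes the fluctuation as a weighted sum $S_n=\sum_{j\leq K_n} U_j a_{n,j}$ with weights $a_{n,j}=\J\{|\Pi_{j,n}|=r\}/K_n$ and invokes Stout's strong law for weighted sums (Proposition \ref{Prop:Stout}), which only requires the polynomial lower bound $1/K_n \leq T n^{-(\sigma-\eps)}$ on the \emph{total} number of blocks. You instead use Hoeffding's inequality conditionally on $\Pi$ together with a conditional Borel--Cantelli argument; this is more elementary and self-contained, but it needs the stronger input that $\mathcal{K}_{r}(\Pi_n)$ itself grows like a positive multiple of $c_n$ --- which you correctly extract from the same Gnedin-type limit, so nothing is lost. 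Two small bookkeeping points that you should make explicit: to upgrade the concentration bound to a.s.\ convergence of the ratio you must run Borel--Cantelli along a sequence $\delta_k \downarrow 0$ and intersect the countably many full-measure sets; and the constant $c(\omega)$ and rank $N(\omega)$ beyond which $\mathcal{K}_{r}(\Pi_n)\geq c(\omega)\, c_n$ are random, which is harmless for the conditional first Borel--Cantelli lemma (the tail sums $\sum_{n\geq N}\P(A_n\mid \Pi)$ still vanish a.s.) but worth stating. Neither point affects the validity of the argument.
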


\begin{proof} Note that
\begin{equation}\label{knr}
\mathcal{K}_{r}(\tilde \Pi_n)= \sum_{j=1}^{K_n} \J\{Z_j \in \XX_0^c\}\J\{ |\Pi_{j,n}|=r  \} + \Delta_{n,r}, 
\end{equation}
where $\Delta_{n,r}$ is the number of blocks with $r$ elements in $\tilde \Pi_n$ which are derived 
by merging two or more blocks of $\Pi_n$.
In the proof of Lemma \ref{proptildeK_n}, we have already shown that 
$\Lambda_n/c_n \to 0$ a.s.. Since 
$\Delta_{n,r} \leq \Lambda_n$ one gets 
$\Delta_{n,r}/c_n \to 0$ a.s.. 
Now, by \eqref{knr}, one has 
\[
\frac{\mathcal{K}_{r}(\tilde \Pi_n)}{c_n}=(1-a) \frac{K_{n,r}}{c_n} + S_n \frac{K_n}{c_n} +\frac{\Delta_{n,r}}{c_n}
\]
where 
\[
\begin{split}
K_{n,r}& =\mathcal{K}_{r}(\Pi_n) \\
S_n & = \sum_{j=1}^{K_n} U_j a_{n,j}  \\
U_j & =\J\{Z_j \in \XX_0^c\}-(1-a) \\
a_{n,j}&=\frac{\J\{ |\Pi_{n,j}|=r  \} }{K_n}. \\
\end{split}
\]
Recalling that $K_n/c_n \to S$ a.s.,
the thesis follows from  Proposition \ref{lemma3}, if we prove that 
$S_n \to 0$ a.s.

Since $ S>0$ a.s. and $c_n=n^\sigma\ell(n)$, 
 there is a finite random variable $T$ and $0\leq \eps<\sigma$ such that
\[
\frac{1}{K_n}   \leq \frac{T}{n^{\sigma-\eps}} 
\]
for every $n$ with probability one.
Hence
\begin{equation}\label{boundAn1}
\sum_{j \geq 1} a_{n,j}^2 = \frac{K_{n,r}}{K_n^2} \leq \frac{1}{K_n}\leq \frac{T}{n^{\sigma-\eps}} 
\end{equation}
and 
\begin{equation}\label{boundAn2}
a_{n,j} \leq \frac{1}{K_n}\leq \frac{T}{n^{\sigma-\eps}}.
\end{equation}
The thesis now follows by \eqref{boundAn1}-\eqref{boundAn2} in combination with 
 Corollary 2 in \cite{Stout68}, reported as Proposition \ref{Prop:Stout} in Appendix.
To be more explicit,  let  $\CG$ the $\sigma$-field generated by
$(\Pi_n)_{n\geq 1}$ and set 
$U=(U_j)_{j\geq 1}$. Write $S_n=F_n(U,A)$ where  $A:=[K_n,a_{n,1},\dots,a_{n,K_n}]_{n \geq 1}$ 
and  $F_n$ is a  deterministic function. 
Since
$A$  is  $\CG$-measurable and $U$ is independent from $\CG$,  a regular version of the conditional law of $U$ given $\CG$ is $\P_{U|\CG}(du|\omega)=Q(du)$
where  $Q(du)$ is the law of $U$. 
At this stage note that 
\[
\begin{split}
\P\{ \lim_n S_n=0 |\CG \}(\omega) & =\int \J\{ \lim_n F_n(u,A(\omega))=0\} \P_{U|\CG}(du|\omega) \\
& 
=\int \J\{ \lim_n F_n(u,A(\omega))=0\}  Q(du)=Q\{u: \lim_n F_n(u,A(\omega))=0\}.
\\
\end{split} 
 \]
By \eqref{boundAn1}-\eqref{boundAn2},  for $\omega$ in  a set of $\P$-probability one, 
$\tilde a_{n,j}:=a_{n,j}(\omega)$ satisfies \eqref{stoutSLLN0}
and hence, by \eqref{stoutSLLN}, 
$Q\{ \lim_n F_n(u,A(\omega))=0\}=1$, which shows that 
$\P\{ \lim_n S_n=0 \}=1$. 
\end{proof}

{Combining Lemma \ref{lemmaKnr} and Proposition \ref{lemmakarlin}  in Appendix we can easily state  next result.}

\begin{thm}\label{PropAs3}
Assume that \eqref{sigmadiversity} holds true with $c_n=n^{\sigma} \ell(n)$
where $0 < \sigma < 1$  and  $\ell$ is a deterministic slowly varying function at $+\infty$,
and that $\alpha(x)=x^{\sigma_0} \ell_0^*(x)$
where $0 \leq \sigma_0 \leq 1$  and  $\ell_0^*$ is a slowly varying function at $+\infty$ (with
$\lim_{x \to +\infty}  \ell_0^*(x)=+\infty$ if $\sigma_0=0$). 
If $a<1$, then
\[
\frac{\mathcal{K}_{r}(\tilde \Pi_n) }{n^\sigma \ell(n)} \to (1-a) S \frac{\sigma\Gamma(r-\sigma)}{\Gamma(1-\sigma) r!} \quad \text{a.s.}
\] 
 for every $r\geq 1$. 
\end{thm}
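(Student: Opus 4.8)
The plan is to observe that Theorem \ref{PropAs3} follows by composing Lemma \ref{lemmaKnr} with Proposition \ref{lemmakarlin}, in exactly the same way that Theorem \ref{PropAs2} was obtained from Lemma \ref{proptildeK_n}. The only real work is to check that the regular-variation hypothesis on $\alpha$ forces assumption {\rm({\bf H})} to hold; once this is in hand, Lemma \ref{lemmaKnr} delivers the conclusion verbatim, since its normalizing sequence is already taken to be $c_n=n^\sigma\ell(n)$.

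First I would identify $L_n=\Lambda(Z_1^*,\dots,Z_n^*)$ with the number of occupied boxes after $n$ draws in Karlin's infinite urn scheme whose box frequencies are governed by the atom masses $\bar a_i$ (the diffuse part being lumped into the atom $\bar x_1$). The counting function of these frequencies is precisely $\alpha(x)=\#\{j:a_j\geq 1/x\}$ introduced in \eqref{defalpha}. Invoking Proposition \ref{lemmakarlin} under the assumption $\alpha(x)=x^{\sigma_0}\ell_0^*(x)$ then yields $L_n/b_n\to z_0$ a.s., with $b_n=n^{\sigma_0}\ell_0(n)$ and with $\ell_0,z_0$ as recorded in Theorem \ref{PropAs2}: namely $\ell_0=\ell_0^*$, $z_0=\Gamma(1-\sigma_0)$ when $0\leq\sigma_0<1$, and $\ell_0(x)=\int_x^{+\infty}u^{-1}\ell_0^*(u)\,du$, $z_0=1$ when $\sigma_0=1$. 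I would then verify that this is exactly assumption {\rm({\bf H})}: the index lies in $[0,1]$ and $\ell_0$ is slowly varying, while the boundary requirements also hold, since $\ell_0^*(x)\to+\infty$ when $\sigma_0=0$ and the tail integral $\ell_0(x)\to 0$ when $\sigma_0=1$.

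With {\rm({\bf H})} established and $c_n=n^\sigma\ell(n)$, $0<\sigma<1$, the hypotheses of Lemma \ref{lemmaKnr} are met, and its conclusion
\[
\frac{\mathcal{K}_{r}(\tilde \Pi_n)}{c_n}\to(1-a)\,\frac{\sigma\Gamma(r-\sigma)}{\Gamma(1-\sigma)\,r!}\,S\qquad\text{a.s.}
\]
is, after substituting $c_n=n^\sigma\ell(n)$, exactly the assertion of the theorem. I would remark that the limit is free of $\sigma_0$ and $z_0$ because, for $a<1$, the merged blocks contribute only the lower-order term $\Delta_{n,r}\leq\Lambda_n$, which is negligible relative to $c_n$ precisely on account of {\rm({\bf H})}; the dominant order comes from the diffuse component, producing the Gibbs-type factor $\sigma\Gamma(r-\sigma)/(\Gamma(1-\sigma)r!)$ together with the thinning factor $(1-a)$.

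There is no serious obstacle here, since the argument is a direct composition of two already-proved results. The only point needing a little care is matching the output of Karlin's theorem to the precise form of {\rm({\bf H})}, in particular confirming the limiting behaviour of the slowly varying factor $\ell_0$ in the two boundary cases $\sigma_0=0$ and $\sigma_0=1$; but this verification is identical to the one already underlying Theorem \ref{PropAs2}.
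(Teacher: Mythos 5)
Your proposal is correct and follows exactly the paper's (very terse) argument: Proposition \ref{lemmakarlin} converts the regular-variation hypothesis on $\alpha$ into assumption {\rm({\bf H})}, and Lemma \ref{lemmaKnr} then gives the stated limit with $c_n=n^\sigma\ell(n)$. Your added remarks on why $\sigma_0$ and $z_0$ drop out of the final limit are accurate and consistent with the proof of Lemma \ref{lemmaKnr}.
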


\appendix
\section{}

\subsection{Exchangeable  random partitions} 

In this section we collect some definitions and  well-known results concerning 
exchangeable  random partitions. We refer to  Chapter 11 \cite{Aldous85} and 
\cite{Pit06} for the proofs and further details. 

Let $\nabla:=\{ p_j^{\downarrow} \in [0,1]: p_1^{\downarrow} \geq p_2^{\downarrow} \geq \dots,  \sum_{j \geq 1} p_j^{\downarrow} \leq 1\} $.
We start by recalling  Kingman's theorem. 

\begin{proposition}[\cite{Kingman78}]\label{kingmanth}
Given any exchangeable random partition $\Pi$
 with EPPF $\FP$, denote 
 by $\Pi_{j,n}^{\downarrow}$ {the blocks of the partition rearranged in decreasing order with respect to number of element in the blocks of $\Pi_n$}. Then 
\begin{equation}\label{kingmanTh}
\lim_n \Big (\frac{|\Pi_{j,n}^{\downarrow}|}{n} \Big)_{j \geq 1}= (p_j^{\downarrow})_{j \geq 1} \quad a.s. 
\end{equation}
for some random $p^{\downarrow}= (p_j^{\downarrow})_{j \geq 1}$ taking values in $\nabla$.
Moreover, $\mathfrak{K}(\FP):=\text{Law}(p^{\downarrow})$   defines  a bijection 
from the set of the EPPF and the laws on $\nabla$.
\end{proposition}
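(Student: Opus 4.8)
The plan is to derive both assertions from Kingman's paintbox representation: every exchangeable random partition of $\N$ is, conditionally on its vector of asymptotic block frequencies, distributed as a paintbox partition. First I would establish the almost sure existence of the frequencies, then upgrade this to the convergence of ranked sizes in \eqref{kingmanTh}, and finally obtain the bijection by exhibiting the EPPF as an explicit integral functional of the law of $p^{\downarrow}$.

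For the existence of frequencies, fix the block of $\Pi_n$ containing the integer $1$. By exchangeability of $\Pi$ the indicators $\J\{j \sim 1\}$, $j \geq 2$, form an exchangeable $\{0,1\}$-valued sequence, so de Finetti's theorem gives that $|\Pi_{1,n}|/n$ converges almost surely to a random frequency $f_1 \in [0,1]$. Applying the same argument conditionally to the block whose least element is the $j$-th value seen in order of appearance, I get that every block possesses an almost sure asymptotic frequency $f_{(j)}$, and Fatou's lemma yields $\sum_j f_{(j)} \leq 1$. Ranking the $f_{(j)}$ in decreasing order produces a random element $p^{\downarrow} \in \nabla$.

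Next I would match this with the size-ranked frequencies in \eqref{kingmanTh}. The key observation is that, almost surely, only finitely many blocks have frequency exceeding any $\eps > 0$; each such block has size $\sim f n \to \infty$, whereas the aggregate of all frequency-zero blocks contributes size $o(n)$. Hence for $n$ large the largest blocks are exactly those with the largest positive frequencies, their relative ranking stabilizes, and $|\Pi^{\downarrow}_{j,n}|/n \to p^{\downarrow}_j$ for every index $j$ with $p^{\downarrow}_j > 0$; for indices with $p^{\downarrow}_j = 0$ one checks that the $j$-th ranked relative size tends to $0$. This reconciliation of the finite-$n$ ordering by size with the ordering of the limiting frequencies is the delicate bookkeeping step.

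For the bijection I would invoke the representation theorem: conditionally on $p^{\downarrow}$, the partition $\Pi$ is the paintbox obtained by assigning i.i.d.\ uniform labels and clustering through intervals of lengths $p^{\downarrow}_1, p^{\downarrow}_2, \dots$ together with a diffuse remainder. Consequently the EPPF factorizes as $\FP(n_1,\dots,n_k) = \int_{\nabla} \FP_s(n_1,\dots,n_k)\, \mathfrak{K}(\FP)(ds)$, where $\FP_s$ is the explicit paintbox EPPF attached to $s \in \nabla$. This shows that $\FP$ is determined by $\mathrm{Law}(p^{\downarrow})$ alone, so $\mathfrak{K}$ is injective; surjectivity follows because, for any law $\mu$ on $\nabla$, the $\mu$-mixed paintbox is an exchangeable partition whose ranked frequencies have law $\mu$. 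The main obstacle is precisely this conditional paintbox representation --- the fact that, beyond the frequencies, nothing else survives in the limit --- which I would establish by encoding $\Pi$ through an exchangeable label sequence and applying de Finetti, or by a direct reverse-martingale argument on the occupation indicators.
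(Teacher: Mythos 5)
You are being compared against a citation, not a proof: the paper states this proposition as Kingman's classical result and refers to \cite{Kingman78}, Chapter 11 of \cite{Aldous85} and \cite{Pit06} for proofs, so the benchmark is the standard argument, whose architecture (de Finetti for the frequency of the block containing a fixed integer, then the conditional paintbox representation, then the integral formula for the EPPF over $\nabla$) your proposal reproduces. However, your step 2 rests on a false claim: that ``the aggregate of all frequency-zero blocks contributes size $o(n)$''. This fails whenever the partition has dust, i.e. $\sum_j p_j^{\downarrow}<1$ with positive probability. The extreme case is the exchangeable partition of $\N$ into singletons: every block has frequency zero, yet the frequency-zero blocks carry all $n$ elements; more generally a paintbox with $\sum_j s_j<1$ places a fraction close to $1-\sum_j s_j$ of $[n]$ into zero-frequency blocks. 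Moreover, even the weaker statement you actually need --- that the \emph{largest} blocks are eventually exactly those with the largest positive frequencies --- does not follow from a per-block estimate alone: each zero-frequency block individually has relative size $o(n)/n\to 0$, but there are infinitely many of them, and a supremum of infinitely many quantities each tending to $0$ need not tend to $0$ without a uniform bound.

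The repair is standard and is already latent in your own step 1. De Finetti applied to the exchangeable indicator sequence $(\J\{j\sim k\})_{j\neq k}$ shows that, conditionally on the limiting frequency $f_k$ of the block containing $k$, these indicators are i.i.d. Bernoulli$(f_k)$; hence on the event $\{f_k=0\}$ the block of $k$ is almost surely the singleton $\{k\}$. Thus almost surely every block either has a strictly positive frequency or is a singleton of relative size $1/n$, and the reconciliation of the size-ranking with the frequency-ranking in \eqref{kingmanTh} becomes routine: for each $\eps>0$ only finitely many blocks have frequency exceeding $\eps$, their relative sizes converge to their frequencies, and all other blocks contribute relative size exactly $1/n$. (Equivalently, you may derive this \emph{after} your step 3, since in the conditional paintbox the diffuse remainder produces only singletons.) A second, minor, wrinkle: defining frequencies for ``the block whose least element is the $j$-th value seen in order of appearance'' involves random indices to which exchangeability does not directly apply; it is cleaner to observe that for each fixed $k$ the block containing $k$ has an a.s. frequency, and since every block is the block containing its least element, a.s. all blocks have frequencies simultaneously. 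With these two repairs, your step 3 --- the factorization $\FP(n_1,\dots,n_k)=\int_{\nabla}\FP_s(n_1,\dots,n_k)\,\mathfrak{K}(\FP)(ds)$ giving injectivity, and the mixed paintbox giving surjectivity --- is exactly the proof in the cited references.
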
 

As a consequence, one obtains the following results. 

\begin{itemize}
\item[(A1)] Let $p=(p_j)_{j \geq 1}$ be a sequence of random weights in $[0,1]$ such that $\sum p_j =1$ a.s..
Denote by  $p^{\downarrow}$  the sequence obtained by rearranging $p$  in decreasing order and set 
$\FP=\mathfrak{K}^{-1}(\text{Law}(p^{\downarrow}))$.
Then the random partition induced by a  sequence $(I_n)_{n\geq 1}$ which is 
conditionally i.i.d. given $p$ with conditional distribution 
\[
P\{ I_n=j| p \} = p_j  \quad a.s. 
\]
has EPPF $\FP$. 
\item[(A2)] If  
$\Pi$ is a random partition with EPPF $\EPk$ and the sequence 
$p^{\downarrow}$ defined in \eqref{kingmanTh}
 satisfies $\displaystyle \sum_jp_j^{\downarrow} =1 \; a.s.$, then one can define   a sequence 
 $(I_n)_n$ of integer-valued random variables, conditionally i.i.d.  given $p^{\downarrow}$, with $P\{ I_n=j| p^{\downarrow} \} = p^{\downarrow}_j$, such that $\Pi(I)=\Pi,\ a.s.$. 
\end{itemize}

\subsection{Further useful results.}

In order to verify assumption {\rm({\bf H}) one 
can use well-known results from   \cite{Karlin67}. 

\begin{proposition}\label{lemmakarlin}
Let $\alpha$ be defined in \eqref{defalpha} and 
assume that 
 $\alpha(x)=x^{\sigma_0} \ell_0^*(x)$
where $0 \leq \sigma_0 \leq 1$  and  $\ell_0^*$ a slowly varying function at $+\infty$.
Then,  {\rm({\bf H})} holds true 
with 
\begin{itemize}
\item 
$\ell_0(x)=\ell_0^*(ax)$ and $z_0=a^{\sigma_0} \Gamma(1-\sigma_0)$ if $0 \leq \sigma_0 <1$;
\item  $\ell_0(x)=\int_{ax}^{+\infty} u^{-1} \ell_0^*(u)du<+\infty$  and $z_0=a$ if  $\sigma_0=1$. 
\end{itemize}
\end{proposition}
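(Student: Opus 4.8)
The plan is to identify $L_n$ with the number of occupied boxes in an infinite urn scheme and then to quote Karlin's strong law for occupancy counts; the only genuine work is to match the counting function of that scheme with the function $\alpha$ of \eqref{defalpha}, which is where the factor $a$ enters.

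First I would make the urn structure explicit. Because $Z_j^*=Z_j$ when $Z_j\in\XX_0$ and $Z_j^*=\bar x_1$ otherwise, the variables $(Z_j^*)_{j\ge1}$ are i.i.d. on the countable set $\XX_0=\{\bar x_1,\bar x_2,\dots\}$ with cell probabilities $q_1=\bar a_1+(1-a)$ and $q_i=\bar a_i$ for $i\ge2$; writing $a_i=\bar a_i/a$ for the masses of $H^d$ appearing in \eqref{defalpha}, one has $\sum_i q_i=a+(1-a)=1$, so $(q_i)_{i\ge1}$ is a bona fide probability vector. Thus $L_n=\Lambda(Z_1^*,\dots,Z_n^*)$ is exactly the number of distinct cells occupied after $n$ i.i.d. draws from $(q_i)_{i\ge1}$, and Karlin's results apply.

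Next I would compute the occupancy counting function $\tilde\alpha(x):=\#\{i:q_i\ge 1/x\}$ and relate it to $\alpha$. For $i\ge2$ one has $q_i\ge 1/x$ iff $a_i\ge 1/(ax)$, while the single inflated cell $q_1\ge\bar a_1$ alters the count by at most one, so $\tilde\alpha(x)=\alpha(ax)+O(1)$. Since $\alpha(ax)=(ax)^{\sigma_0}\ell_0^*(ax)\to+\infty$ (using $\ell_0^*\to+\infty$ when $\sigma_0=0$), the bounded correction is asymptotically negligible and $\tilde\alpha(x)\sim a^{\sigma_0}x^{\sigma_0}\ell_0^*(ax)$; in particular $\tilde\alpha$ is regularly varying of index $\sigma_0$.

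Finally I would read the conclusion off Karlin's theorem regime by regime. For $0\le\sigma_0<1$ the strong law gives $L_n/\tilde\alpha(n)\to\Gamma(1-\sigma_0)$ a.s., hence $L_n/\bigl(n^{\sigma_0}\ell_0^*(an)\bigr)\to a^{\sigma_0}\Gamma(1-\sigma_0)$, which is {\rm({\bf H})} with $\ell_0(x)=\ell_0^*(ax)$ and $z_0=a^{\sigma_0}\Gamma(1-\sigma_0)$. For $\sigma_0=1$ the corresponding Karlin result is $L_n\sim n\int_n^{\infty}u^{-1}\tilde L(u)\,du$ with $\tilde L(x)=a\ell_0^*(ax)$ the slowly varying part of $\tilde\alpha$; the substitution $v=au$ rewrites the integral as $a\int_{ax}^{\infty}v^{-1}\ell_0^*(v)\,dv$, giving {\rm({\bf H})} with $\ell_0(x)=\int_{ax}^{\infty}u^{-1}\ell_0^*(u)\,du$ and $z_0=a$. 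In each case the side conditions in {\rm({\bf H})} follow at once, since $\ell_0^*(a\cdot)$ is slowly varying and $\int_{ax}^{\infty}u^{-1}\ell_0^*(u)\,du\to0$. I expect the main obstacle to be precisely this bookkeeping: transporting the index-$\sigma_0$ regular variation (and, for $\sigma_0=1$, the integral normalization of Karlin's statement) through the change of variables that manufactures the factor $a$, while checking that the single inflated cell $q_1$ leaves the occupancy asymptotics unchanged.
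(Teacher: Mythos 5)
Your proof is correct and takes essentially the same route as the paper's: both identify the cell probabilities of $(Z_n^*)_{n\ge1}$, show that the associated counting function coincides with $\alpha(ax)$ (the paper notes the identity $\alpha^*(x)=\alpha(ax)$ holds exactly for $x$ large enough, while you carry an $O(1)$ correction from the inflated cell --- either is adequate), and then read off the conclusion from Karlin's strong laws, using the Gnedin--Hansen--Pitman form of the normalizing integral in the case $\sigma_0=1$. Your change-of-variables bookkeeping producing the factors $a^{\sigma_0}$ and $a$ matches the constants $z_0$ in the statement.
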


\begin{proof}  
Now note that $\P\{Z_n^*=j\}=a a_j \J\{j \not =1\} + (1-a \sum_{k \geq 2} a_k)\J \{ j=1\}=:a_j^*$. Hence, 
for $x$ big enough 
\begin{equation}\label{alphastar}
\alpha^*(x) :=\#\{ j : a_j^* \geq 1/x\}=\alpha(ax).
\end{equation}
The thesis follows now from Theorem 8 and Theorem $1^\prime$ in  \cite{Karlin67}. 
The expression for $\ell_0$ in the case  $\sigma_0=1$ is slightly different 
from the one used in Theorem $1^\prime$ of \cite{Karlin67} and it is taken from 
Proposition 14 in \cite{Gnedin07}.
\end{proof}

\begin{proposition}[\cite{Gnedin07}]\label{lemma3}
Assume that \eqref{sigmadiversity} holds true with $c_n=n^{\sigma} \ell(n)$
where $0 < \sigma < 1$  and  $\ell$ is a deterministic slowly varying function at $+\infty$. 
Then, for any $K>1$, 
\[
\frac{1}{n^\sigma \ell(n)}(|\Pi_n|,\mathcal{K}_{n,1}(\Pi_n),\dots,\mathcal{K}_{n,K}(\Pi_n)) 
\to S  \Big (1, \frac{\sigma \Gamma(1-\sigma)}{\Gamma(1-\sigma) 1!}, \dots, \frac{\sigma \Gamma(K-\sigma)}{\Gamma(1-\sigma) K!} \Big ) \quad \text{a.s.}
\] 
\end{proposition}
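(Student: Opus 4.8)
The plan is to reduce the statement to the asymptotics of an infinite occupancy scheme and then invoke Karlin's theorems. By Kingman's correspondence (Proposition \ref{kingmanth}) the exchangeable partition $\Pi$ is characterised by its ranked frequencies $(p_j^{\downarrow})_{j\geq 1}$, and conditionally on these frequencies $\Pi$ is generated by the paintbox: one samples an i.i.d. sequence $(X_i)_{i\geq 1}$ with $\P\{X_i=j\mid p^{\downarrow}\}=p_j^{\downarrow}$ and sets $i\sim i'$ iff $X_i=X_{i'}$. Because $0<\sigma<1$ forces sublinear growth of $|\Pi_n|$, there can be no ``dust'', i.e. $\sum_j p_j^{\downarrow}=1$ a.s.; hence under the paintbox $|\Pi_n|$ is exactly the number of occupied boxes among the first $n$ balls and $\mathcal{K}_{n,r}(\Pi_n)$ the number of boxes occupied by exactly $r$ of them. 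The whole question thus becomes one about an infinite occupancy scheme with the (random) frequencies $(p_j^{\downarrow})$.

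First I would work conditionally on the frequencies. For a.e. realisation of $p^{\downarrow}$ the sequence $(X_i)$ is i.i.d. with deterministic probabilities, so Karlin's results (the deterministic occupancy analogue of Proposition \ref{lemmakarlin}, namely Theorem 8 and Theorem $1^{\prime}$ of \cite{Karlin67}) apply. Writing $\alpha_p(x):=\#\{j:p_j^{\downarrow}\geq 1/x\}$, the assumption $|\Pi_n|/c_n\to S$ a.s. with $c_n=n^{\sigma}\ell(n)$, $0<\sigma<1$, forces, through the Tauberian equivalence between the growth of the number of occupied boxes and the tail of the frequencies, the counting function $\alpha_p$ to be regularly varying of index $\sigma$ a.s., with slowly varying part matching $\ell$ and leading constant encoding $S$. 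This is the only place where the precise form $c_n=n^{\sigma}\ell(n)$ enters.

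With conditional regular variation of index $\sigma$ in hand, Karlin's theorems give, for a.e. frequency realisation, the joint a.s. convergence of both $|\Pi_n|/c_n$ and $\mathcal{K}_{n,r}(\Pi_n)/c_n$ to finite positive limits determined by the frequencies, and, crucially, a limiting ratio
\[
\frac{\mathcal{K}_{n,r}(\Pi_n)}{|\Pi_n|}\longrightarrow p_\sigma(r):=\frac{\sigma\,(1-\sigma)_{r-1}}{r!}=\frac{\sigma\,\Gamma(r-\sigma)}{\Gamma(1-\sigma)\,r!}
\]
that depends only on the index $\sigma$, not on the particular frequencies. Multiplying this ratio by the hypothesis $|\Pi_n|/c_n\to S$ yields $\mathcal{K}_{n,r}(\Pi_n)/c_n\to S\,p_\sigma(r)$ a.s. for each fixed $r$; since this holds simultaneously for the finitely many $r=1,\dots,K$ together with $|\Pi_n|/c_n\to S$, the asserted vector convergence follows at once, a finite intersection of probability-one events having probability one.

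The hard part will be the measure-theoretic bookkeeping of the conditioning, exactly as in the proof of Lemma \ref{lemmaKnr}: Karlin's statements are for deterministic frequencies, so one must apply them to a regular conditional distribution of $(X_i)$ given the $\sigma$-field generated by $(p_j^{\downarrow})$, verify that the regular-variation hypothesis holds for $\omega$ outside a single $\P$-null set, and reassemble the conditional a.s. limits into an unconditional a.s. statement. Equally delicate is justifying the converse Tauberian step, namely that the prescribed growth of $|\Pi_n|$ pins the index of $\alpha_p$ to be exactly $\sigma$ (rather than merely bounding it) and fixes the slowly varying factor; this is where the general occupancy asymptotics of \cite{Gnedin07}, refining \cite{Karlin67}, are needed.
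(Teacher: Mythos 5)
Your proposal is correct and follows essentially the same route as the paper, whose proof of Proposition \ref{lemma3} is simply a deferral to Section 10 (formula (51)) of \cite{Gnedin07}: your sketch --- paintbox reduction, absence of dust, conditional application of Karlin's occupancy asymptotics after the Tauberian step pinning down the regular variation of the frequency counting function, and the limiting ratio $\mathcal{K}_{n,r}(\Pi_n)/|\Pi_n|\to\sigma\Gamma(r-\sigma)/(\Gamma(1-\sigma)r!)$ --- is a faithful unpacking of exactly that cited argument. The delicate points you flag (regular conditional distributions and the converse Tauberian step) are precisely the ones handled in \cite{Gnedin07}, so no genuinely new idea is needed beyond what you describe.
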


\begin{proof}
When $\ell$ is a constant then the thesis follows from the results of Section 
10 in \cite{Gnedin07}, see in particular formula  (51). Minor modifications 
of the arguments used in Section 
10 in \cite{Gnedin07} yields the results for a general $\ell$. 
\end{proof}

For ease of reference, we report below 
Corollary 2 of \cite{Stout68}.

\begin{proposition}[\cite{Stout68}]\label{Prop:Stout} If $(D_j)_{j \geq 1}$ are i.i.d. bounded random variables with zero mean 
 and $(\tilde a_{n,j})_{n \geq 1, j \geq 1}$ are deterministic weights such that
  \begin{equation}\label{stoutSLLN0}
\text{ $\sum_{j \geq 1} \tilde a_{n,j}^2 \leq C/n^{\alpha}$ and $\tilde a_{n,j} \leq C/n^\alpha$ with $0<\alpha<1$,}
 \end{equation}
 then for $n \to +\infty$
 \begin{equation}\label{stoutSLLN}
 \sum_{j \geq 1} \tilde a_{n,j} D_j  \to 0 \quad {a.s.}.
 \end{equation}
 \end{proposition}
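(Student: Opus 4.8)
The plan is to use the method of high moments together with the Borel--Cantelli lemma. Write $S_n:=\sum_{j\geq 1}\tilde a_{n,j}D_j$ and let $M$ be a bound for the $|D_j|$. Since the weights are square-summable and the $D_j$ are bounded, the defining series converges in $L^q$ for every $q$, so $S_n$ is a well-defined random variable whose even moments may be computed termwise. For a fixed positive integer $p$,
\[
\E[S_n^{2p}]=\sum_{j_1,\dots,j_{2p}\geq 1}\tilde a_{n,j_1}\cdots\tilde a_{n,j_{2p}}\,\E[D_{j_1}\cdots D_{j_{2p}}].
\]
Because the $D_j$ are i.i.d.\ with zero mean, the factor $\E[D_{j_1}\cdots D_{j_{2p}}]$ vanishes unless every index value occurring among $j_1,\dots,j_{2p}$ occurs at least twice; and in any case $|\E[D_{j_1}\cdots D_{j_{2p}}]|\leq M^{2p}$.

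First I would organise the surviving terms according to the set partition of the $2p$ positions into the groups sharing a common index value. Only partitions all of whose blocks have size at least $2$ contribute, so the number $q$ of blocks satisfies $q\leq p$; if the block sizes are $s_1,\dots,s_q$ (with $\sum_i s_i=2p$ and each $s_i\geq 2$), the associated contribution is bounded, up to a combinatorial constant depending only on $p$, by $M^{2p}\prod_{i=1}^{q}\sum_{j\geq 1}\tilde a_{n,j}^{s_i}$. Here both hypotheses in \eqref{stoutSLLN0} enter: using $\tilde a_{n,j}\leq C/n^{\alpha}$ to extract the excess powers and $\sum_j\tilde a_{n,j}^2\leq C/n^{\alpha}$ to control the remaining square, one gets for every $s\geq 2$
\[
\sum_{j\geq 1}\tilde a_{n,j}^{s}\leq\Big(\max_{j}\tilde a_{n,j}\Big)^{s-2}\sum_{j\geq 1}\tilde a_{n,j}^{2}\leq \frac{C^{s-1}}{n^{\alpha(s-1)}}.
\]
Multiplying over the blocks yields $\prod_{i}\sum_j\tilde a_{n,j}^{s_i}\leq C^{2p-q}n^{-\alpha(2p-q)}$, and since $q\leq p$ the exponent satisfies $2p-q\geq p$. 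Summing over the finitely many admissible partition shapes produces a bound of the form
\[
\E[S_n^{2p}]\leq K_p\,M^{2p}\,n^{-\alpha p}
\]
for a constant $K_p$ depending only on $p$ and $C$.

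To finish I would fix $p$ so large that $\alpha p>1$, which is possible because $\alpha>0$. By Markov's inequality, for every $\eps>0$,
\[
\P\{|S_n|>\eps\}\leq \eps^{-2p}\,\E[S_n^{2p}]\leq \eps^{-2p}K_p M^{2p}\,n^{-\alpha p},
\]
and the right-hand side is summable in $n$ since $\alpha p>1$. The Borel--Cantelli lemma then gives $\P\{|S_n|>\eps\ \text{infinitely often}\}=0$, and applying this along $\eps=1/m$ with $m\in\N$ yields $S_n\to 0$ a.s., which is \eqref{stoutSLLN}.

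The main obstacle is the bookkeeping in the moment expansion: correctly identifying that only partitions into blocks of size at least $2$ survive, and verifying that every such partition, once both bounds in \eqref{stoutSLLN0} are invoked, contributes at most a factor $n^{-\alpha p}$. Once the moment estimate is in place, the Markov--Borel--Cantelli step is routine.
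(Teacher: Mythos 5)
Your proof is correct. The bookkeeping works exactly as you say: by independence and zero mean, only those terms survive in which every index among $j_1,\dots,j_{2p}$ is repeated, i.e.\ the position-partition has all blocks of size at least $2$, forcing $q\le p$ blocks; your estimate $\sum_j|\tilde a_{n,j}|^{s}\le(\max_j|\tilde a_{n,j}|)^{s-2}\sum_j\tilde a_{n,j}^2\le C^{s-1}n^{-\alpha(s-1)}$ then gives $\E[S_n^{2p}]\le K_pM^{2p}n^{-\alpha p}$, and choosing $p$ with $\alpha p>1$ makes the Markov--Borel--Cantelli step routine. Two cosmetic caveats: write absolute values around the weights (the hypothesis as stated is one-sided, though in the paper's application the weights are nonnegative, and in any case $\max_j|\tilde a_{n,j}|\le\sqrt{C}\,n^{-\alpha/2}$ follows from the square-sum bound), and the termwise computation of $\E[S_n^{2p}]$ deserves one line of justification (the partial sums converge in $L^{2p}$ by the same uniform estimate applied to tails). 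That said, your route is genuinely different from the paper's, which in fact contains no proof: the proposition is reported verbatim, for ease of reference, as Corollary 2 of Stout (1968), where it is derived in the more general setting of weighted sums of bounded martingale differences via exponential-type inequalities, yielding stretched-exponential tail bounds and hence \emph{complete} convergence, and it is in that argument that the sup bound $\max_j|\tilde a_{n,j}|\le Cn^{-\alpha}$ plays an essential role (it controls the Bernstein-type correction term). Your moment method is more elementary and self-contained, still gives complete convergence (the tails $O(n^{-\alpha p})$ are summable), and actually shows that for the stated conclusion the sup-norm hypothesis is redundant: since $\|\tilde a_{n,\cdot}\|_{\ell^s}\le\|\tilde a_{n,\cdot}\|_{\ell^2}$ for $s\ge 2$, one has
\[
\prod_{i=1}^{q}\sum_{j\geq 1}|\tilde a_{n,j}|^{s_i}\leq\Big(\sum_{j\geq 1}\tilde a_{n,j}^{2}\Big)^{\frac{1}{2}\sum_{i=1}^q s_i}\leq C^{p}\,n^{-\alpha p},
\]
so the square-sum bound alone suffices. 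In short: Stout's machinery buys sharper (exponential) tail estimates for each fixed $n$ and covers martingale-difference dependence, while your argument buys transparency, minimal hypotheses, and independence from that machinery.
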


\bibliographystyle{apalike}

\begin{thebibliography}{}

\bibitem[Aldous, 1985]{Aldous85}
Aldous, D.~J. (1985).
\newblock Exchangeability and related topics.
\newblock In {\em \'Ecole d'\'et\'e de probabilit\'es de {S}aint-{F}lour,
  {XIII}---1983}, volume 1117 of {\em Lecture Notes in Math.}, pages 1--198.
  Springer, Berlin.

\bibitem[Barcella et~al., 2016]{Barcella2016}
Barcella, W., De~Iorio, M., Baioa, G., and Malone-Leeb, J. (2016).
\newblock Variable selection in covariate dependent random partition models: an
  application to urinary tract infection.
\newblock {\em Stat. Med.}, 35(4):1373--13892.

\bibitem[Bassetti et~al., 2018]{BaCaRo}
Bassetti, F., Casarin, R., and Rossini, L. (2018).
\newblock Hierarchical species sampling models.
\newblock Technical report, arxiv.org/abs/1803.05793.

\bibitem[Broderick et~al., 2018]{Broderick2018}
Broderick, T., Wilson, A.~C., and Jordan, M.~I. (2018).
\newblock Posteriors, conjugacy, and exponential families for completely random
  measures.
\newblock {\em Bernoulli}, 24(4B):3181--3221.

\bibitem[Canale et~al., 2017]{Canale2017}
Canale, A., Lijoi, A., Nipoti, B., and Pr{\"u}nster, I. (2017).
\newblock On the pitman--yor process with spike and slab base measure.
\newblock {\em Biometrika}, 104(3):681--697.

\bibitem[Cui and Cui, 2012]{Cui2012}
Cui, K. and Cui, W. (2012).
\newblock Spike-and-slab dirichlet process mixture models.
\newblock {\em Spike-and-Slab Dirichlet Process Mixture Models}, 2(5):512--518.

\bibitem[de~Haan and Ferreira, 2006]{deHaanFerreira}
de~Haan, L. and Ferreira, A. (2006).
\newblock {\em Extreme value theory}.
\newblock Springer Series in Operations Research and Financial Engineering.
  Springer, New York.
\newblock An introduction.

\bibitem[Dunson et~al., 2008]{Dunson08}
Dunson, D.~B., Herring, A.~H., and Engel, S.~M. (2008).
\newblock Bayesian selection and clustering of polymorphisms in functionally
  related genes.
\newblock {\em J. Amer. Statist. Assoc.}, 103(482):534--546.

\bibitem[Gnedin et~al., 2007]{Gnedin07}
Gnedin, A., Hansen, B., and Pitman, J. (2007).
\newblock Notes on the occupancy problem with infinitely many boxes: general
  asymptotics and power laws.
\newblock {\em Probab. Surv.}, 4:146--171.

\bibitem[Gnedin and Pitman, 2005]{GP2006}
Gnedin, A. and Pitman, J. (2005).
\newblock Exchangeable {G}ibbs partitions and {S}tirling triangles.
\newblock {\em Zap. Nauchn. Sem. S.-Peterburg. Otdel. Mat. Inst. Steklov.
  (POMI)}, 325(Teor. Predst. Din. Sist. Komb. i Algoritm. Metody. 12):83--102,
  244--245.

\bibitem[Karlin, 1967]{Karlin67}
Karlin, S. (1967).
\newblock Central limit theorems for certain infinite urn schemes.
\newblock {\em J. Math. Mech.}, 17:373--401.

\bibitem[Kim et~al., 2009]{Vannucci2009}
Kim, S., Dahl, D.~B., and Vannucci, M. (2009).
\newblock Spiked {D}irichlet process prior for {B}ayesian multiple hypothesis
  testing in random effects models.
\newblock {\em Bayesian Anal.}, 4(4):707--732.

\bibitem[Kingman, 1978]{Kingman78}
Kingman, J. F.~C. (1978).
\newblock The representation of partition structures.
\newblock {\em J. London Math. Soc. (2)}, 18(2):374--380.

\bibitem[Pitman, 1995]{Pit95}
Pitman, J. (1995).
\newblock Exchangeable and partially exchangeable random partitions.
\newblock {\em Probab. Th. Rel. Fields}, 102(2):145--158.

\bibitem[Pitman, 1996]{Pitman96}
Pitman, J. (1996).
\newblock Some developments of the {B}lackwell-{M}ac{Q}ueen urn scheme.
\newblock In {\em Statistics, probability and game theory}, volume~30 of {\em
  IMS Lecture Notes Monogr. Ser.}, pages 245--267. Inst. Math. Statist.,
  Hayward, CA.

\bibitem[Pitman, 2003]{Pitman2003}
Pitman, J. (2003).
\newblock Poisson-{K}ingman partitions.
\newblock In {\em Statistics and science: a {F}estschrift for {T}erry {S}peed},
  volume~40 of {\em IMS Lecture Notes Monogr. Ser.}, pages 1--34. Inst. Math.
  Statist., Beachwood, OH.

\bibitem[Pitman, 2006]{Pit06}
Pitman, J. (2006).
\newblock {\em Combinatorial Stochastic Processes}, volume 1875.
\newblock Springer-Verlag.

\bibitem[Pitman and Yor, 1997]{Pit97}
Pitman, J. and Yor, M. (1997).
\newblock The two-parameter {P}oisson-{D}irichlet distribution derived from a
  stable subordinator.
\newblock {\em The Annals of Probability}, 25(2):855--900.

\bibitem[Regazzini et~al., 2003]{RLP}
Regazzini, E., Lijoi, A., and Pr\"unster, I. (2003).
\newblock Distributional results for means of normalized random measures with
  independent increments.
\newblock {\em Annals of Statistics}, 31(2):560--585.

\bibitem[Sangalli, 2006]{Sangalli2006}
Sangalli, L.~M. (2006).
\newblock Some developments of the normalized random measures with independent
  increments.
\newblock {\em Sankhy\={a}}, 68(3):461--487.

\bibitem[Stout, 1968]{Stout68}
Stout, W.~F. (1968).
\newblock Some results on the complete and almost sure convergence of linear
  combinations of independent random variables and martingale differences.
\newblock {\em Ann. Math. Statist.}, 39:1549--1562.

\bibitem[Suarez and Ghosal, 2016]{suarez2016}
Suarez, A.~J. and Ghosal, S. (2016).
\newblock Bayesian clustering of functional data using local features.
\newblock {\em Bayesian Analysis}, 11(1):71--98.

\end{thebibliography}

\end{document}